\title[Resolutions of symmetric ideals]{Resolutions of symmetric ideals via stratifications of derived categories}
\author{Karthik Ganapathy}
\address{Department of Mathematics, University of California, San Diego, CA}
\email{\href{mailto:karthg@umich.edu}{karthg@umich.edu}}
\thanks{}
\urladdr{\url{https://public.websites.umich.edu/~karthg/}}
\newtheorem*{customthm*}{Theorem}
\keywords{}
\theoremstyle{plain}
\newtheorem{mainthm}{Theorem}
\date{} 
\begin{document}
\begin{abstract}
We propose a method to unify various stability results about symmetric ideals in polynomial rings by stratifying related derived categories. We execute this idea for chains of $\GL_n$-equivariant modules over an infinite field $k$ of positive characteristic. We prove the Le--Nagel--Nguyen--R\"omer conjectures for such sequences and obtain stability patterns in their resolutions as corollaries of our main result, which is a semiorthogonal decomposition for the bounded derived category of $\GL_{\infty}$-equivariant modules over $S = k[x_1, x_2, \ldots, x_n, \ldots]$. Our method relies on finite generation results for certain local cohomology modules. We also outline approaches (i) to investigate Koszul duality for $S$-modules taking the Frobenius homomorphism (of $\GL_{\infty}$) into account, and (ii) to recover and extend Murai's results about free resolutions of symmetric monomial ideals.
\end{abstract}
\maketitle
\section{Introduction}\label{s:intro}
Recent research has highlighted subtle stability properties exhibited by symmetric ideals in polynomial rings. For instance, Le--Nagel--Nguyen--R{\"o}mer conjecture that the regularity \cite{lnnr21reg} and projective dimension \cite{lnnr20pdim} of chains of homogeneous symmetric ideals
\[I_1 \subset I_2 \subset \ldots \subset I_n \subset \ldots\] 
with $I_n \subset k[x_1, x_2, \ldots, x_n]$ are both eventually linear functions of $n$. 
This conjecture is known to hold in special cases \cite{lnnr20pdim, lnnr21reg, rai21reg, ss22pdim, hnt24edge, ln24mono}. Of particular interest to us is the monomial ideal case where Murai \cite{murai20betti} shows that the minimal free resolutions themselves stabilize by proving that all nonzero entries in (a suitable limit) of their Betti tables are concentrated in finitely many ``lines" of integral slope.

Our paper is a preliminary attempt at unveiling the structural reasons behind Murai's result. To that end, we work with $\Fec(S)$-modules over an infinite field $k$ of characteristic $p > 0$, which are compatible sequences $\{ M(k^n) \}_n$ of $\GL_n$-equivariant $k[x_1, x_2, \ldots, x_n]$-modules (see Section~\ref{s:background} for definitions). The class of $\Fec(S)$-modules over $k$ is sufficiently rich, as it captures the subtle phenomena observed by Murai \cite{murai20betti} while remaining tractable. We studied $\Fec(S)$-modules in \cite{ganglp} under the guise of $\GL_{\infty}$-equivariant modules over $S \coloneqq k[x_1, x_2, \ldots, x_n, \ldots]$; a $\Fec(S)$-module $\{M(k^n)\}_n$ corresponds to such an object by taking the limit of $M(k^n)$ as $n \to \infty$ (Proposition~\ref{prop:equiv}). 

Let $\fm^{[q]} = (x_1^q, x_2^q, \ldots, x_n^q, \ldots )$ for prime powers $q$ and $\Mod_S$ be the category of $\GL_{\infty}$-equivariant $S$-modules. Our main result, proved in Section~\ref{s:derived} is:
\begin{mainthm}\label{thm:semiorth}
    We have a semi-orthogonal decomposition of the bounded derived category
    \begin{displaymath}
        \rD^b_{\fgen}(\Mod_S) = \langle  
        \cT_1, \cT_p, \cT_{p^2}, \ldots, \cT_{q}, \ldots,\cT_{\infty}
        \rangle,
    \end{displaymath} 
  where $\cT_{q}$ for a prime power $q$ is the triangulated subcategory generated by the modules $\Sm \otimes L_{\lambda}$ with $L_{\lambda}$ allowed to vary over all irreducible $\GL_{\infty}$-representations, and similarly $\cT_{\infty}$ is the triangulated subcategory generated by the module $S \otimes L_{\lambda}$.
\end{mainthm}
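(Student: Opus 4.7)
The plan is to verify the two axioms characterising a semi-orthogonal decomposition: (i) appropriate Hom-vanishings between the pieces, and (ii) joint generation of $\rD^b_{\fgen}(\Mod_S)$. Since each $\cT_q$, including $\cT_\infty$, is the triangulated hull of an explicit list of generators, both axioms reduce to statements on generators which can be propagated by standard triangulated dévissage. The organising principle is that the Frobenius powers
\[
\fm = \fm^{[1]} \supset \fm^{[p]} \supset \fm^{[p^2]} \supset \cdots
\]
form a strictly decreasing chain of $\GL_\infty$-stable ideals, and the associated local cohomology functors $\mathrm{R}\Gamma_{\fm^{[q]}}$ carve $\rD^b_{\fgen}(\Mod_S)$ into the proposed strata.

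For semi-orthogonality, I would compute $\mathrm{RHom}_S(S/\fm^{[q]} \otimes L_\mu,\; S/\fm^{[q']} \otimes L_\lambda)$ for distinct prime powers $q < q'$, together with the analogous pairing against the free generators $S \otimes L_\lambda$ of $\cT_\infty$. The key tool is the $\GL_\infty$-equivariant Koszul resolution of $S/\fm^{[q]}$, whose terms have the form $S \otimes \Lambda^i V^{(r)}$, with $q = p^r$ and $V^{(r)}$ the $r$-th Frobenius twist of the defining representation. Applying $\mathrm{Hom}_S(-,\, S/\fm^{[q']} \otimes L_\lambda)$ and then extracting $\GL_\infty$-equivariant pairings against $L_\mu$ reduces the computation to one in the representation theory of $\GL_\infty$. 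The expected vanishing is already visible at the Hom level: in infinitely many variables, any monomial $m$ with $m \cdot \fm^{[q]} \subset \fm^{[q']}$ for $q < q'$ must itself lie in $\fm^{[q']}$, because a finitely supported exponent vector cannot satisfy $\alpha_i \geq q' - q$ for cofinitely many $i$ unless some other coordinate is already $\geq q'$. Pairings against $\cT_\infty$ vanish even more easily because $S$ is a torsion-free domain.

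For generation, I would iterate local cohomology. Given $M \in \rD^b_{\fgen}(\Mod_S)$ and a prime power $q$, the canonical fibre sequence separating out the $\fm^{[q]}$-power torsion realises $M$ as an extension of $\mathrm{R}\Gamma_{\fm^{[q]}}(M)$ by its $\fm^{[q]}$-generic complement. The finite generation input alluded to in the abstract should ensure that the associated graded $\mathrm{R}\Gamma_{\fm^{[q]}}(M)/\mathrm{R}\Gamma_{\fm^{[q/p]}}(M)$ is built by finite extensions from copies of $S/\fm^{[q]} \otimes L_\lambda$ and hence lies in $\cT_q$. Telescoping over $q = 1, p, p^2, \ldots$ then produces a tower whose successive cones occupy $\cT_1, \cT_p, \cT_{p^2}, \ldots$ in the required order, with stable residue --- the $\fm$-generic part of $M$ --- landing in $\cT_\infty$.

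The principal obstacle, I expect, is the identification of the successive local cohomology quotients with objects of $\cT_q$. In the non-Noetherian $\GL_\infty$-equivariant setting, the requisite finite generation statements for local cohomology are not automatic and require genuinely equivariant input, presumably the technical heart of the paper. Once that ingredient is in hand, both the Ext vanishings and the assembly of the resulting triangles into a coherent tower should proceed along fairly standard recollement-style lines.
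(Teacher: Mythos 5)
Your overall architecture---filter by $\fm^{[q]}$-power torsion, identify the successive cones with objects of $\cT_q$, and lean on finite generation of equivariant local cohomology---matches the paper's. But there are two places where the argument as written does not close.

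\emph{Semi-orthogonality.} You propose to verify $\rR\Hom_S(S/\fm^{[q]}\otimes L_\mu,\, S/\fm^{[q']}\otimes L_\lambda)=0$ for $q<q'$ directly from the Koszul resolution, and you give a monomial-counting argument for the $\Hom$-level statement. That argument is fine for $\Hom$, but the higher $\Ext$ do not vanish for such elementary reasons: already over $k[x_1,\dots,x_n]$ one has $\Ext^i(k,\, k[x_1,\dots,x_n]/\fm^{[p]})\neq 0$ for $0\le i\le n$ (a standard complete-intersection computation). The vanishing over $S$ is a genuine theorem, and in the paper it is a purely \emph{formal} by-product of realizing $\cT_{q'}$ as the essential image of a derived section functor (right adjoint to the Serre quotient by the torsion subcategory): $\Hom(T, \rR S(-)) \cong \Hom(T(T),-) = 0$ for torsion $T$ by derived adjunction and exactness of the quotient functor. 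Your remark that pairings against $\cT_\infty$ ``vanish even more easily because $S$ is a torsion-free domain'' has the same gap: torsion-freeness gives $\Hom$-vanishing only, whereas $\rR\Gamma(S\otimes L_\lambda)=0$ is Proposition~\ref{prop:semilcvanishing} and requires a finite injective resolution of $L_\lambda$ in $\Rep^{\pol}(\GL)$. In effect, the Ext-vanishings you want to check by hand are equivalent to the finiteness result you have deferred, so treating semi-orthogonality as the more accessible of the two axioms is backwards.

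\emph{Generation.} You assert that the cone of $\rR\Gamma_{\fm^{[q/p]}}(M)\to\rR\Gamma_{\fm^{[q]}}(M)$ ``should'' lie in $\cT_q$ once finite generation of local cohomology is known. Finiteness alone does not give this. You also need (a) a characterization of flat $\Sm$-modules as precisely those with vanishing derived torsion (Corollary~\ref{cor:lcvanishingSm}), which rests on the shift theorem of \cite{ganglp} rather than on finiteness, and (b) a d\'evissage (Proposition~\ref{prop:restrofsection} and Corollary~\ref{cor:devissagelcfiniteness}) promoting both (a) and the finiteness from modules \emph{scheme-theoretically} supported on $\fm^{[q]}$ to modules only \emph{set-theoretically} supported there, which is where the cones in your tower actually live. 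Step (b) is not routine: it uses property (Inj) for the relevant Serre subcategories and a comparison of two section functors. So even granting the finiteness of local cohomology on the nose, identifying your graded pieces with objects of $\cT_q$ requires two further ingredients that the proposal does not supply, and it is these, together with the formal recollement machinery of \cite{ss19gl2}, that the paper's three-step proof is organized around.
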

In Section~\ref{s:proofBC}, we use this result to first prove the LNNR conjectures for $\Fec(S)$-modules and then explain how the strata $\cT_q$ in the above semiorthogonal decomposition captures the eventual behaviour of the slope $q-1$ strands in the Betti table of a $\Fec(S)$-module.  In the final section, we also explain a possible approach to generalize Theorem~\ref{thm:semiorth} to $\fM$-equivariant $S$-modules (where $\fM$ is the group of generalized permutation matrices); we expect this to fully recover and extend Murai's results \cite{murai20betti}.

\begin{remark}
    The remarkable aspect of Theorem~\ref{thm:semiorth} is that the decomposition holds at the level of the \textit{bounded derived category}. Indeed, the statement obtained by replacing $\rD^b_{\fgen}$ with $\rD^+$ and $\cT_q$ replaced appropriately is merely a corollary of the Artin--Rees lemma for the ideals $\fm^{[q]}$ \cite[Lemma~3.9]{ganglp} (see also \cite[Proposition~4.14]{ss19gl2}). 
    We emphasize that such semiorthogonal decompositions rarely hold even for the bounded derived categories of ordinary noetherian rings essentially because local cohomology is rarely finitely generated.
\end{remark}

\subsection{LNNR conjectures} 
\begin{mainthm}[LNNR for $\Fec(S)$-modules over $k$]\label{thm:lnnrVecS}\leavevmode

(1) Assume $M$ is a finitely generated $\Fec(S)$-module over $k$. There exists $a \in \bZ$ and a  prime power $q$ such that $\reg(M(k^n)) = (q-1)n + a$ for $n \gg 0$. 
   
(2) Assume further that $M$ is not flat. There exists $a \in \bN$ such that $\pdim(M(k^n)) = n-a$ for $n \gg 0$.
\end{mainthm}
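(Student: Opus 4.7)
The plan is to derive both assertions from Theorem~\ref{thm:semiorth} by tracking $\reg$ and $\pdim$ stratum by stratum and then combining the pieces through the semiorthogonal filtration. First I would compute both invariants on a typical generator of each stratum. For a generator $\Sm \otimes L_\lambda$ of $\cT_q$ with $q$ finite, the evaluation at $k^n$ admits a Koszul-style resolution, which should yield $\reg = (q-1)n + c_\lambda$ and $\pdim = n$ for $n \gg 0$; twisting by the Schur functor $L_\lambda$ only shifts these invariants by a $\lambda$-dependent constant. For the generators $S \otimes L_\lambda$ of $\cT_\infty$, both quantities are constant in $n$, namely $\pdim = 0$ and $\reg = c_\lambda$.

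Next, Theorem~\ref{thm:semiorth} produces, for any finitely generated $M \in \rD^b_{\fgen}(\Mod_S)$, a finite tower of exact triangles expressing $M$ in terms of pieces $M_q \in \cT_q$, only finitely many of which are nonzero. Standard exact-triangle estimates for $\reg$ and $\pdim$, combined with the preceding computation on generators, show that $\reg(M(k^n))$ and $\pdim(M(k^n))$ are eventually bounded above by the maximum over contributing $q$ of the corresponding linear-in-$n$ functions. To promote these upper bounds to equalities, let $q^\ast$ be the largest $q$ (allowing $\infty$) with $M_{q^\ast}$ nonzero. For (1), the slopes $q-1$ for distinct $q$ are pairwise distinct, so the leading linear growth of the $\cT_{q^\ast}$-piece cannot be cancelled by contributions from lower strata once $n$ is large, giving $\reg(M(k^n)) = (q^\ast - 1)n + a$. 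For (2), the module $M$ is not flat precisely when some finite-$q$ stratum is nonzero, and its $\pdim = n$ contribution dominates the $\pdim = 0$ contribution from $\cT_\infty$, producing $\pdim(M(k^n)) = n - a$ for some $a \ge 0$.

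The main obstacle lies in this last non-cancellation argument. Upper bounds via exact triangles are routine, but establishing that the leading-order behavior of the extremal stratum actually survives the successive extensions in the tower requires care. I would exploit the semiorthogonality in Theorem~\ref{thm:semiorth}, which restricts the direction of morphisms between strata and thereby prevents different $q$-contributions from destructively interfering at leading order; the explicit shape of the Betti tables of the generators from Step~1 should then force the highest stratum's signature growth to appear in $M(k^n)$ itself. For (2), one additionally needs $a \in \bN$; this should follow from the general structural bound $\pdim(N) \le n$ for any finitely generated module over a polynomial ring in $n$ variables, together with the Step~1 estimate on generators.
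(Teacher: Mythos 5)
Your proposal takes a genuinely different route from the paper. The paper explicitly says its proof of Theorem~\ref{thm:lnnrVecS} relies only on the coarse decomposition $\rD^b_{\fgen}(\Mod_S) = \langle \rD^b_{\fgen}(\Mod_S^{\tors}), \rD^b_{\fgen}(\Mod_S^{\gen}) \rangle$ of Proposition~\ref{prop:coarsesemiS}, not on the full stratification of Theorem~\ref{thm:semiorth}. It then passes through Corollary~\ref{cor:loccoh}, which identifies $\rR^i\Gamma(M)(k^n)$ with the usual local cohomology $H^i_{\fm(k^n)}(M(k^n))$ for $n \gg 0$, and reads off $\pdim$ from the depth via Auslander--Buchsbaum and $\reg$ from the local cohomology characterization of regularity; the linear slope comes from a separate lemma on $\maxdeg$ of finitely generated torsion modules. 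Your plan instead decomposes $M$ fully into strata $F_q(M)$, computes $\reg$ and $\pdim$ on generators, and argues non-cancellation by distinctness of slopes. That slope non-cancellation argument is not unreasonable in spirit (a version of it is what the paper uses to prove Theorem~\ref{thm:bettiMr}), but you end up needing more of the machinery to do less.

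There is a concrete gap in part (2). You correctly compute that every generator $\Sm \otimes L_\lambda$ of a finite-$q$ stratum has $\pdim = n$ at level $k^n$, and you observe $\pdim(N) \le n$ always. But the theorem asserts $\pdim(M(k^n)) = n - a$ for a fixed $a \ge 0$, and your argument never produces or pins down this $a$: you write that the $\pdim = n$ contribution ``dominates\ldots producing $\pdim(M(k^n)) = n - a$,'' but nothing in the stratum-by-stratum computation generates a shift at all, nor shows that the difference $n - \pdim(M(k^n))$ is eventually \emph{constant} rather than merely nonnegative. The missing ingredient is that the pieces $F_q(M)$ are complexes rather than modules, and the shift $a$ is precisely the lowest cohomological degree in which $\rR\Gamma(M)$ is nonzero (the depth of $M(k^n)$ for $n \gg 0$). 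Tracking this through your exact-triangle filtration would require controlling cohomological degrees of the $F_q(M)$ and showing the bottom nonzero degree survives all the extensions; once you set that up carefully you will essentially have reproved the paper's argument via $\rR^i\Gamma$ and Auslander--Buchsbaum, which gets the constant $a$ for free. A smaller issue in part (1): your $q^\ast$ should be the largest \emph{finite} $q$ with $F_q(M) \ne 0$ (the $\cT_\infty$ piece contributes constant regularity, not infinite slope), and the constant in $(q^\ast-1)n + a$ again depends on the cohomological degrees of $F_{q^\ast}(M)$, which needs to be spelled out.
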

\begin{example}\label{exm:idealI1}
Let $I$ be the ideal $\fm^{[p]}\fm \subset S$. The corresponding $\Fec(S)$-module is the sequence  $\{I(k^n) = \langle x_i^p x_j, 1 \leq i, j \leq n \rangle\}$. The regularity (resp.~projective dimension) of $I(k^n)$ starting with $n=1$ is \[p+1, 2p-1, 3p-2, 4p-3, \ldots\] (resp.~$0, 1, 2, 3, \ldots$), suggesting that $\reg I(k^n)$ and $\pdim (I(k^n))$ stabilizes to $(p-1)n + 1$ and $n-1$ respectively.
\end{example}
Our proof of Theorem~\ref{thm:lnnrVecS} only relies on Proposition~\ref{prop:coarsesemiS} which is a weaker version of Theorem~\ref{thm:semiorth} about decomposing $ \rD^b_{\fgen}(\Mod_S)$ into $ \langle \cT_{\infty}^{\perp}, \cT_{\infty} \rangle$. Let $\Gamma$ be the functor which assigns the maximal torsion submodule to each $\GL_{\infty}$-equivariant $S$-module. The aforementioned proposition is equivalent to proving $\rR\Gamma$ preserves the bounded derived category, which we show in Theorem~\ref{thm:lcfinitenessS}. In Section~\ref{ss:lnnr}, we relate $\rR^i\Gamma(M)$ to the sequence $\{H^i_{\fm(k^n)}(M(k^n))\}$ of usual local cohomology modules (Corollary~\ref{cor:loccoh}); the finite generation result implies that the depth of $M(k^n)$ is eventually constant whence we get part (2) of Theorem~\ref{thm:lnnrVecS} using the Auslander--Buchsbaum formula. The proof for the regularity conjecture is similar albeit involves more bookkeeping.
\begin{remark}
    Sam--Snowden \cite{ss16gl} proved the LNNR conjectures for $\Fec(S)$-modules over fields of characteristic zero; see \cite{ss22pdim} for the $\Fec(S^{{\otimes} n})$ case as well.
    In their setting, the regularity is eventually constant, whereas in characteristic $p$, the regularity is eventually linear of slope $(q-1)$ with $q$ a prime power. We believe making headway on LNNR for $\Fec(S^{\otimes 2})$-modules over $k$ would shed light on proving the conjecture in its most general form.
\end{remark}

\subsection{Slope \texorpdfstring{$(q-1)$}{(q-1)} strands in Betti tables} We finally explain how the strata $\cT_q$ are related to the free resolution of a $\GL$-equivariant $S$-module. In \cite{ganglp}, we showed that the free resolution of a finitely generated module has finitely many ``strands" with slopes of the form $p^r-1$; we restate this in a convenient form here (Theorem~\ref{thm:bettiS}). Theorem~\ref{thm:semiorth} implies that we can associate complexes $\{F_q(M)\}_q$ such that $F_q(M)$ ``captures" the eventual slope $(q-1)$ behaviour in the resolution of $M$; here $F_q$ is just the projection functor $\rD^b_{\fgen}(\Mod_S) \to \cT_q$ for each prime power $q$. 

\begin{mainthm}[Imprecise version]\label{thm:bettiM}
    Let $M$ be a finitely generated $\GL_{\infty}$-equivariant $S$-module.
    For all prime powers $q$, the resolution of $F_q(M)$ has finitely many strands all of slope $(q-1)$, and the slope $(q-1)$ strands of the resolution of $M$ agrees with that of $F_q(M)$.
\end{mainthm}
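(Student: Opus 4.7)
The plan is to combine Theorem~\ref{thm:semiorth} with the slope classification of Theorem~\ref{thm:bettiS} from \cite{ganglp}, which constrains Betti tables of objects in $\rD^b_{\fgen}(\Mod_S)$ to lie on finitely many strands of slopes of the form $p^r - 1$.

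The first step is to analyze the generators $\Sm \otimes L_\lambda$ of $\cT_q$. Since $\Sm$ is built from the Frobenius power $\fm^{[q]}$, each such generator admits a Koszul-type resolution whose internal degrees are concentrated in multiples of $q$, so its Betti table lies entirely on a single slope-$(q-1)$ line. Propagating this through cones and shifts --- and invoking Theorem~\ref{thm:bettiS} to forbid spurious strands of other slopes from emerging --- shows that every object of $\cT_q \cap \rD^b_{\fgen}(\Mod_S)$ has Betti table concentrated on finitely many slope-$(q-1)$ strands. This gives the first assertion of the theorem. (For $q = \infty$, the generators $S \otimes L_\lambda$ are free, so $\cT_\infty$ consists of perfect complexes and contributes a degenerate piece corresponding to finite projective dimension.)

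The second step identifies the slope-$(q-1)$ strands of $M$ with those of $F_q(M)$. The semiorthogonal decomposition assembles $M$ iteratively from the pieces $\{F_{q'}(M)\}$ via distinguished triangles; applying $- \otimes^{\mathbf{L}}_S k$ yields long exact sequences on $\operatorname{Tor}$. By the first step, only $F_q(M)$ can contribute to the slope-$(q-1)$ line, so that strand of $M$ should come entirely from $F_q(M)$, up to potential cancellations from connecting homomorphisms between $\operatorname{Tor}$'s of different strata.

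The main obstacle I expect is precisely ruling out such cancellations: \emph{a priori}, a connecting map $\operatorname{Tor}_i(F_{q'}(M), k)_d \to \operatorname{Tor}_{i-1}(F_{q''}(M), k)_d$ between different strata could alter slope-$(q-1)$ Betti numbers of $M$. To control this I would exploit the $\operatorname{Hom}$-vanishings built into the semiorthogonal decomposition together with the arithmetic constraint that the strands of two distinct slopes $p^r - 1$ and $p^s - 1$ meet in at most one lattice point, so the bidegrees where such a cancellation could occur are very sparse. A careful formulation, focused on the eventual behaviour of the slope-$(q-1)$ strand (consistent with the ``imprecise version'' label of the theorem), should absorb any residual discrepancy and reduce the comparison to a strand-by-strand bookkeeping argument.
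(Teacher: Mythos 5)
Your proposal matches the paper's argument: the first assertion is handled via the Koszul resolution of the generators $S/\fm^{[q]} \otimes L_\lambda$ together with the triangle-stability of the strand set $\fX(-)$ (Lemma~\ref{lem:strand}), and the second assertion hinges on exactly the arithmetic observation you identify, that strands of distinct slopes meet in at most one lattice point, so cross-strata cancellations in the long exact $\Tor$ sequences occur only finitely often and vanish after passing to generic graded representations. The paper does not need the Hom-vanishing side of the semiorthogonality for this step, but that is a harmless redundancy in your sketch.
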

We make this theorem precise (Theorem~\ref{thm:bettiMr}) in Section~\ref{ss:betti} and swiftly deduce it using Theorem~\ref{thm:semiorth}.

\begin{example}\label{exm:idealI2}
Let $I$ be the ideal from Example~\ref{exm:idealI1}. We let $\bV \coloneqq k^{\infty}$ be the standard representation of $\GL_{\infty}$ and $\bV^{(1)}$ be the ``Frobenius twist" of $\bV$, i.e., the pullback of $\bV$ along the map $\GL_{\infty} \to \GL_{\infty}$ where $(a_{i,j}) \mapsto (a_{i, j}^p)$.

The module $S/I$ is supported on $\fm^{[p]}$ so $F_q(S/I) = 0$ for $q > p$. Using the short exact sequence
\[ 0 \to \fm^{[p]}/I \to S/I \to S/\fm^{[p]} \to 0\]
and the fact that $\fm^{[p]}/I \cong \bV^{(1)}$ with trivial $S$-action, we get that $F_1(S/I) \cong F_1(\bV^{(1)})$ and $F_p(S/I) \cong F_p(S/\fm^{[p]})$.  Using Corollary~\ref{cor:lcvanishingSm}, we further deduce that $F_1(\bV^{(1)}) \cong \bV^{(1)}$ concentrated in cohomological degree $0$ and similarly $F_p(S/\fm^{[p]}) \cong S/\fm^{[p]}$. Figure~\ref{fig:bettitable} illustrates the content of Theorem~\ref{thm:bettiM} for the Betti table of $S/I$ (working in $\overline{\bF_3}[x_1, x_2, x_3, x_4, x_5]$).
\end{example}

For a $\GL_{\infty}$-equivariant $S$-module $M$, if a given strand in the resolution of $M$ has infinitely many nonzero entries, we expect all entries on that strand to be nonzero eventually; Murai proves this for symmetric monomial ideals.  We haven't been able to prove this result using our methods and suspect Koszul duality comes into play; see Section~\ref{ss:koszul} for some precise questions relating $\cT_{q}$ with the derived category of modules over (Frobenius-twisted) exterior algebras. 

\begin{figure}
\scalebox{0.8}{
 \begin{minipage}{\linewidth}
\begin{displaymath}
\begin{matrix}
& 0 & 1 & 2 & 3 & 4 & 5 &  \\
0: &  & . & . & . & . & . & \\
1: & . & . & . & . & . & . & \\
2: & . & . & . & . & . & . & \\
3: & 5 & 25 & 50 & 50 & 25 & 5 & \\
4: & . & . & . & . & . & . & \\
5: & . & . & . & . & . & . & \\
\end{matrix}
\mspace{10mu} + \mspace{10mu}
\begin{matrix}
& 0 & 1 & 2 & 3 & 4 & 5 &  \\
0: & 1 & . & . & . & . & . & \\
1: & . & . & . & . & . & . & \\
2: & . & 5 & . & . & . & . & \\
3: & . & . & . & . & . & . & \\
4: & . & . & 10 & . & . & . & \\
5: & . & . & . & . & . & . & \\
6: & . & . & . & 10 & . & . & \\
7: & . & . & . & . & . & . & \\
8: & . & . & . & . & 5 & . & \\
9: & . & . & . & . & . & . & \\
10: & . & . & . & . & . & 1 &
\end{matrix}
\mspace{10mu} \approx \mspace{10mu}
\begin{matrix}
   & 0 & 1 & 2 & 3 & 4 & 5 &  \\
0: & 1 & . & . & . & . & . & \\
1: & . & . & . & . & . & . & \\
2: & . & . & . & . & . & . & \\
3: & . & 25 & 50 & 50 & 25 & 5 & \\
4: & . & . & 10 & . & . & . & \\
5: & . & . & . & . & . & . & \\
6: & . & . & . & 10 & . & . & \\
7: & . & . & . & . & . & . & \\
8: & . & . & . & . & 5 & . & \\
9: & . & . & . & . & . & . & \\
10: & . & . & . & . & . & 1 
\end{matrix}
\end{displaymath}
\end{minipage}}
\caption{$\beta(F_1(S/I))$ and $\beta(F_p(S/I))$ only have strands of slope $0$ and $p-1$ respectively, which agree with corresponding strands of $\beta(S/I)$ where $I = \fm^{[p]}\fm$ and $p = 3$. Note that $\Tor_0(F_1(S/I))$ cancels $\Tor_1(F_p(S/I))$ but they cannot interact in higher ($\Tor$-)degree due to grading reasons; this observation is crucial in the proof of Theorem~\ref{thm:bettiM}}\label{fig:bettitable}
\end{figure}

\subsection*{Acknowledgements} The author is grateful to Andrew Snowden for helpful discussions and numerous suggestions, Teresa Yu for sharing her ideas about the LNNR conjecture, and Sridhar Venkatesh for comments on an earlier draft. The author was supported in part by NSF grant DMS-2301871.

\section{Preliminaries}\label{s:background}
We fix an infinite field $k$ of positive characteristic $p$ and let $\bV$ be the infinite-dimensional $k$-vector space with basis $\{e_i\}_{i \in \bN}$. The group $\GL$ (or  $\GL_{\infty}$) is the group of automorphisms of $\bV$. We refer the reader to \cite[Section~2]{gan22ext} for details on polynomial representations of $\GL$ relevant to the contents of this paper. We merely note that the irreducible \textit{polynomial} representations of $\GL$ are indexed by partitions of arbitrary size ${\lambda}$ and denoted $L_{\lambda}$. 

\subsection{\texorpdfstring{$\Fec(S)$}{Vec(S)}-modules}
Let $\Fec_k$ be the category of $k$-vector spaces and $\Pol$ be the category of strict polynomial functors over $k$ in the sense of Friedlander--Suslin.

\begin{defn}
A \textit{$\Fec$-algebra} $A$ is a commutative algebra object $A$ in $\Pol$. A \textit{$\Fec(A)$-module} $M$ is a module object for $A$ in $\Pol$.
\end{defn}

Given a $\Fec$-algebra $A$, note that $A(V)$ is automatically an $\bN$-graded vector space induced from the grading on $\Pol$. Similarly, any $\Fec(A)$-module also carries an $\bN$-grading.

\begin{remark}
    Every $\Fec$-algebra is naturally an $\bN$-graded $\FI$-algebra, in the sense of \cite{nr19fioi}. Indeed, we may pull back along the embedding $\FI \to \Fec$ given by $S \mapsto k^S$. Similarly, any $\Fec(A)$-module gives rise to an $\bN$-graded $\FI$-module for the pullback of $A$.
\end{remark}
For a $\Fec$-algebra $A$, the category of $\Fec(A)$-modules is a Grothendieck abelian category. 

\begin{example}[The $\Fec$-algebra $S$]
    The functor $S$ defined by $V \mapsto \Sym(V)$ is naturally a $\Fec$-algebra. An $S$-module is an assignment $V \mapsto M(V)$ where $M(V)$ is a $\Sym(V)$-module, equipped with a rational action of $\GL(V)$ which extends to $\End(V)$, along with some compatibility criteria as $V$ varies.
\end{example}

\begin{proposition}\label{prop:equiv}
Let $\bV = k^\infty$ be the infinite-dimensional $k$-vector space with basis $\{e_i\}_{i \in \bN}$.
\begin{enumerate}
\item The assignment
    \[ A \mapsto A(\bV)\]
    is an equivalence of categories between $\Fec$-algebras and the category of $\GL$-algebras (in the sense of \cite{gan22ext}). 
\item For a $\Fec$-algebra $A$, the assignment
\[ M \mapsto M(\bV)\]
is an equivalence of categories between $\Fec(A)$-modules and the category of $\GL$-equivariant $A(\bV)$-modules.
\end{enumerate}
\end{proposition}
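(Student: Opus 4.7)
The plan is to deduce both statements from a single symmetric monoidal equivalence $E \colon \Pol \to \Rep^{\mathrm{pol}}(\GL)$ defined by $F \mapsto F(\bV)$, where $\Rep^{\mathrm{pol}}(\GL)$ denotes the category of polynomial representations of $\GL_{\infty}$. Both parts of the proposition then fall out formally: part (1) because any symmetric monoidal equivalence induces an equivalence between the categories of commutative algebra objects on either side, and part (2) because modules over an algebra transport along symmetric monoidal equivalences.

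To establish the underlying equivalence $E$, I would decompose both categories by polynomial degree. On the $\Pol$ side this decomposition is built in; on the $\GL$ side it comes from the action of the central $\bG_m \subset \GL$ of scalar matrices. Evaluation preserves the grading, so it suffices to show $E_d \colon \Pol_d \to \Rep^{\mathrm{pol}}_d(\GL)$ is an equivalence for each $d$. For this I would invoke the classical Friedlander--Suslin theorem identifying $\Pol_d$ with modules over the Schur algebra $S(n,d)$, hence with degree $d$ polynomial representations of $\GL_n$, for any $n \geq d$. Passing to the colimit along $n \to \infty$ identifies $\Pol_d$ with degree $d$ polynomial representations of $\GL_{\infty}$; because $\bV$ is infinite-dimensional, every simple strict polynomial functor $L_\lambda$ evaluates to a nonzero polynomial representation, so no simple object is lost.

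Next I would verify that $E$ is symmetric monoidal. The tensor product in $\Pol$ is computed pointwise, $(F \otimes G)(V) = F(V) \otimes_k G(V)$ with the diagonal functorial action, so $E(F \otimes G) = E(F) \otimes E(G)$ as $\GL$-representations, and the symmetry and unit constraints are inherited from those of $\Fec_k$. Applying the general principle that symmetric monoidal equivalences descend to commutative algebra objects yields (1). For (2), given a $\Fec$-algebra $A$, the equivalence $E$ identifies $A$-modules in $\Pol$ with $E(A)$-modules in $\Rep^{\mathrm{pol}}(\GL)$; but an $E(A)$-module in $\Rep^{\mathrm{pol}}(\GL)$ is precisely a $\GL$-equivariant $A(\bV)$-module in the polynomial sense of \cite{gan22ext}.

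The main technical step is the degree-$d$ evaluation equivalence at the $\GL_\infty$ level, in particular essential surjectivity: every polynomial $\GL$-representation equipped with a compatible multiplication must arise from some $F \in \Pol$. This is where one has to be careful with the definition of ``polynomial'' $\GL$-representation (as given in \cite{gan22ext}) so that the Friedlander--Suslin statement passes cleanly to the colimit $n \to \infty$. Once that is in place, the rest of the argument is the formal mechanism of transporting algebraic structures across a symmetric monoidal equivalence and needs no further case analysis.
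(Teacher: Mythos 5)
Your proposal follows essentially the same strategy as the paper: both reduce the statement to the symmetric monoidal equivalence $\Pol \simeq \Rep^{\pol}(\GL)$ given by evaluation at $\bV$, and then obtain both parts by transporting commutative algebra objects and module objects across that equivalence. The paper simply cites the equivalence as known, whereas you sketch its proof via degreewise Friedlander--Suslin; this is supplementary detail rather than a different route.
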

\begin{proof}
The functor $F \mapsto F(\bV)$ is an equivalence of symmetric monoidal categories between $\Pol$ and $\Rep^{\pol}(\GL)$ consisting of polynomial representations of $\GL$. The result follows.
\end{proof}

\begin{defn}
    A $\Fec(A)$-module $M$ is \textit{flat} if $M(V)$ is a free $A(V)$-module for all $V$. An $S(\bV)$-module is $M(\bV)$ is \textit{semi-induced} if $M(\bV)$ has a finite filtration by $A(\bV)$-modules of the form $A(\bV) \otimes L_{\lambda}$.
\end{defn}

\begin{lemma}[cf.~Section 2.4 in \cite{gan22ext}]\label{lem:flat}
   Let $M$ be a finitely generated $\Fec(S)$-module. Then $M$ is flat if and only if $M(\bV)$ is a semi-induced $S(\bV)$-module.
\end{lemma}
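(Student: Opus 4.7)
The plan is to handle the two directions separately. For the `if' direction, a semi-induced filtration $0 = M_0 \subset M_1 \subset \cdots \subset M_n = M(\bV)$ evaluated at any finite-dimensional $V$ yields a filtration of $M(V)$ with graded pieces $S(V) \otimes L_{\lambda_i}(V)$. Since $L_{\lambda_i}(V)$ is a finite-dimensional $k$-vector space (and vanishes when $\lambda_i$ has more rows than $\dim V$), each such piece is a finitely generated graded free $S(V)$-module concentrated in a single internal degree. Graded free modules are projective in the category of graded $S(V)$-modules, so every short exact sequence in the filtration splits and $M(V)$ is itself graded free. Hence $M$ is flat.

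For the `only if' direction, my plan is to induct on the length of the polynomial functor $M/\fm M$, which is finite because $M$ is finitely generated. The base case $M/\fm M = 0$ forces $M = 0$ by the graded Nakayama lemma applied evaluation-wise. For the inductive step, choose a simple subrepresentation $L_\lambda \hookrightarrow M/\fm M$ in the lowest internal grading $d$ where such a socle constituent appears, lift it to a $\GL$-equivariant inclusion $L_\lambda \hookrightarrow M_d$, and extend $S$-linearly to obtain $\phi : S \otimes L_\lambda \to M$ with $L_\lambda$ placed in internal degree $d$. If $\phi$ is injective with flat cokernel, then the inductive hypothesis applies to $\operatorname{coker} \phi$, whose top has strictly fewer composition factors, completing the argument.

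The main obstacle is verifying injectivity of $\phi$ together with flatness of its cokernel. The approach is to apply $- \otimes_S k$ to the sequences $0 \to \ker \phi \to S \otimes L_\lambda \to \operatorname{im} \phi \to 0$ and $0 \to \operatorname{im} \phi \to M \to \operatorname{coker} \phi \to 0$, and combine the resulting Tor long exact sequences using the vanishing $\Tor_1^S(k, M) = 0$ coming from flatness of $M$. The socle-minimal choice of $\lambda$ ensures that the composite $L_\lambda \to (\operatorname{im} \phi) \otimes_S k \hookrightarrow M \otimes_S k$ is injective, forcing $(\ker \phi) \otimes_S k = 0$ and hence $\ker \phi = 0$ by graded Nakayama. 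Flatness of $\operatorname{coker} \phi$ then drops out of the same diagram chase since $S \otimes L_\lambda$ is flat and the relevant $\Tor_1^S$ term vanishes. The genuinely subtle point, in positive characteristic, is that the category $\Pol$ of strict polynomial functors is not semisimple, so both the existence of the lift $L_\lambda \hookrightarrow M_d$ and the interaction of $\phi$ with the modular Pieri-type decomposition of $S \otimes L_\lambda$ must be argued via the weight combinatorics worked out in \cite[Section~2.4]{gan22ext}.
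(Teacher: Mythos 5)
The paper does not print a proof (it cites \cite[Section~2.4]{gan22ext}), so I will assess your argument on its own terms. The ``if'' direction is fine, and the framing of the ``only if'' direction---inducting on the length of $M/\fm M$ and peeling off $S\otimes L_\lambda$ from a socle constituent in the minimal degree $d$ of the top---is a reasonable approach. You are also right that $(\fm M)_d = 0$, so the lift $L_\lambda\hookrightarrow M_d$ exists automatically; the non-semisimplicity of $\Pol$ is not actually an issue at that step, nor does any ``modular Pieri-type decomposition'' of $S\otimes L_\lambda$ enter the argument anywhere.

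The Tor diagram chase, however, is misordered in a way that leaves a genuine gap. Injectivity of the composite $L_\lambda\to M\otimes_S k$ gives you, via right exactness of $-\otimes_S k$ applied to the surjection $S\otimes L_\lambda\to\operatorname{im}\phi$, that $L_\lambda\to(\operatorname{im}\phi)\otimes_S k$ is an isomorphism and that $(\operatorname{im}\phi)\otimes_S k\to M\otimes_S k$ is injective, hence $\Tor_1^S(\operatorname{coker}\phi,k)=0$. It does \emph{not} directly force $(\ker\phi)\otimes_S k=0$: the long exact sequence for $0\to\ker\phi\to S\otimes L_\lambda\to\operatorname{im}\phi\to 0$ identifies $(\ker\phi)\otimes_S k$ with $\Tor_1^S(\operatorname{im}\phi,k)$, which in turn is isomorphic to $\Tor_2^S(\operatorname{coker}\phi,k)$, and this vanishes only after one already knows $\operatorname{coker}\phi$ is flat. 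So you must establish flatness of $\operatorname{coker}\phi$ first and then return for $\ker\phi=0$---the reverse of the order you state, and as written your claimed deduction of $(\ker\phi)\otimes_S k=0$ does not go through. A cleaner route avoids the two-sequence chase entirely: for each $V$, $\phi(V)$ is a map of finitely generated graded free $S(V)$-modules whose reduction modulo the maximal ideal is the injection $L_\lambda(V)\hookrightarrow(M/\fm M)(V)$, and a graded map of free modules with injective reduction is split injective with free cokernel (lift a retraction of the reduction using projectivity of $M(V)$, then apply graded Nakayama), giving both conclusions simultaneously.
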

\subsection{Recollections}
For most of this paper, we will work on the $\GL$-algebra side, but Proposition~\ref{prop:equiv} guarantees that we do not lose (or gain) any information. 

Let $A$ be any $\GL$-algebra with $A_0 = k$. We abuse notation and write $S$ for the $\GL$-algebra $S(\bV)$, and by $S$-module, we always mean a $\GL$-equivariant $S$-module unless otherwise specified. For a prime power $q$, let $\fm^{[q]}$ be the ideal generated by the $q$-th power of the variables in $S$.

\begin{defn}
Given an $A$-module $M$, a nonzero element $m \in M$ is $\textit{torsion}$ if there exists a nonzero $\GL$-ideal $I \subset A$ such that $Im = 0$.  
The \textit{torsion submodule} or \textit{$0$-th local cohomology module} $\Gamma(M)$ of $M$ is the submodule generated by all torsion elements. A module $M$ is \textit{torsion-free} if $\Gamma(M) = 0$ and \textit{torsion} if $\Gamma(M) = M$.
\end{defn}

For $S$-modules and $\Sm$-modules, there is an easy way to test whether a module is torsion since the $\GL$-spectrum of $S$ is totally ordered. 
\begin{lemma}\label{lem:locannihilator}
\leavevmode 
\begin{enumerate}
        \item A finitely generated $S$-module is torsion if and only if it is annihilated by $\fm^{[q]}$ for sufficiently large prime power $q$.
        \item A finitely generated $\Sm$-module is torsion if and only if it is annihilated by a power of $\fm^{[q/p]}$.
\end{enumerate}
\end{lemma}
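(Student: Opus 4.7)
The proof rests on the classification of $\GL$-stable prime ideals in $S$ (cf.\ \cite{ganglp, ss19gl2}): in characteristic $p$, the proper nonzero $\GL$-primes of $S$ form a totally ordered descending chain
\[\fm = \fm^{[1]} \supset \fm^{[p]} \supset \fm^{[p^2]} \supset \ldots,\]
and $S$ is $\GL$-Noetherian. I focus on (1); part (2) follows the same template.

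For $(\Leftarrow)$ of (1), the ideal $\fm^{[q]}$ is a nonzero $\GL$-ideal annihilating $M$, so every element of $M$ is torsion by definition. For $(\Rightarrow)$, let $M$ be finitely generated and torsion with generators $m_1, \ldots, m_r$; each $\Ann(m_i) \subset S$ is a nonzero $\GL$-ideal. The plan is to establish the following sub-claim, and then set $q = \max_i q_i$: \emph{every nonzero $\GL$-ideal of $S$ contains $\fm^{[q_i]}$ for some prime power $q_i$.} Since smaller exponents yield larger Frobenius-power ideals, $\fm^{[q]} \subseteq \fm^{[q_i]} \subseteq \Ann(m_i)$ for every $i$, hence $\fm^{[q]} M = 0$.

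To prove the sub-claim, pick a nonzero homogeneous $f$ in the given ideal $J$. The $\GL$-primes of $S$ containing $(f)^{\GL}$ form an initial segment $\{\fm^{[q']} : q' \leq q_0\}$ of the chain above, where $q_0$ is the largest prime power with $f \in \fm^{[q_0]}$; their intersection is $\fm^{[q_0]}$, so the $\GL$-radical of $(f)^{\GL}$ equals $\fm^{[q_0]}$. By $\GL$-Noetherianity, $(\fm^{[q_0]})^N \subseteq (f)^{\GL}$ for some $N$. Enlarging $N$ to a power of $p$ (we stay inside $(f)^{\GL}$) and using $(x_i^{q_0})^N = x_i^{N q_0} \in (\fm^{[q_0]})^N$ for every $i$, we obtain $(\fm^{[q_0]})^N \supseteq \fm^{[N q_0]}$, and $N q_0$ is itself a prime power by construction; hence $\fm^{[q]} \subseteq J$ for $q = N q_0$.

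Part (2) transfers this template to the $\GL$-spec of $\Sm$: its proper nonzero $\GL$-primes form a finite descending chain whose minimum is (the image of) $\fm^{[q/p]}$, so every nonzero $\GL$-ideal of $\Sm$ has $\GL$-radical containing $\fm^{[q/p]}$, and $\GL$-Noetherianity upgrades this to containment of $(\fm^{[q/p]})^N$ for some $N$. The power is genuinely needed here: for example, the $\GL$-ideal generated by a product $x_1 x_2 \cdots x_q$ together with $\fm^{[q]}$ contains nothing in degree $q/p$, so it cannot contain $\fm^{[q/p]}$ itself. The only subtle step in either part is the prime-power enlargement trick $(\fm^{[q_0]})^N \supseteq \fm^{[N q_0]}$, which I expect to be the main (modest) obstacle; once granted, both statements are immediate consequences of the $\GL$-spec structure of $S$ combined with $\GL$-Noetherianity.
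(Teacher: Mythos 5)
Your proof of the backward direction of both parts is fine. The gap is in the forward direction, specifically the step ``By $\GL$-Noetherianity, $(\fm^{[q_0]})^N \subseteq (f)^{\GL}$ for some $N$.'' In ordinary commutative algebra, the argument that a power of $\sqrt{J}$ lies in $J$ uses that (i) the radical is the set of elements with a power lying in $J$, (ii) it is finitely generated, and (iii) one applies pigeonhole to monomials in those finitely many generators. None of these three transfers to your situation. The $\GL$-radical you define (intersection of $\GL$-primes containing $(f)^{\GL}$) is \emph{not} the ordinary radical: for instance $\sqrt{\fm^{[p]}} = \fm$ as an ordinary ideal, whereas the intersection of $\GL$-primes containing $\fm^{[p]}$ is $\fm^{[p]}$ itself. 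So knowing that $\fm^{[q_0]}$ is the $\GL$-radical does not tell you that its $\GL$-generator $x_1^{q_0}$ has a power lying in $(f)^{\GL}$, and even if you secure such a power (which you can, by projecting $f$ onto the $x_1$-axis), the subsequent pigeonhole step fails: $\fm^{[q_0]}$ is generated by infinitely many $x_i^{q_0}$ as an ordinary ideal, and the span of $\ell^{nq_0}$ over linear forms $\ell$ in characteristic $p$ is genuinely smaller than $\Sym^{nq_0}$ when $p \mid n$ (e.g.\ it collapses to the span of $\{x_i^{nq_0}\}$ when $n = p$), so the $S$-ideal generated by these powers is much smaller than $(\fm^{[q_0]})^N$. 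To make the argument work you need to use the full $\GL$-orbit of $f$ itself (not just of $x_1^{\deg f}$) and a more careful combinatorial analysis of which monomials that orbit's $S$-ideal contains; this is the actual content of \cite[Theorem~A]{ganglp}.

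The paper offers no proof of this lemma --- it simply cites \cite[Theorem~A]{ganglp} for part (1) and \cite[Lemma~3.6]{ganglp} for part (2) --- so you are attempting to reprove a nontrivial structural theorem. Your outline correctly identifies the relevant ingredients (classification of $\GL$-primes, $\GL$-Noetherianity, the prime-power-enlargement trick $(\fm^{[q_0]})^N \supseteq \fm^{[Nq_0]}$), but it misdiagnoses where the real work lies: the nilpotence statement for the $\GL$-radical is not formal.
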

\begin{proof}
   The first part follows from \cite[Theorem~A]{ganglp}, and the second part is \cite[Lemma~3.6]{ganglp}.
\end{proof}

We finally collect some of the main structural results about $\GL$-equivariant $S$-modules from \cite{ganglp} relevant to the contents of this paper. Recall that $t_0(M)$ is the generation degree of an $S$-module.

\begin{proposition}\label{prop:masterprop}
\leavevmode
    \begin{enumerate}
       \item For an injective $\GL$-representation $I$, the $S$-module $S \otimes I$ is injective; all torsion-free injectives are of this form.
        \item The injective envelope of a torsion $\Sm$-module in $\Mod_{\Sm}$ is also torsion.
        \item The injective envelope of a torsion $S$-module is also torsion.
        \item Let $M$ be a finitely generated $S$-module. In $\Mod_S^{\gen}$, the object $T(M)$ has a finite injective resolution by finitely generated injectives.
        \item Let $M$ be a finitely generated torsion-free $\Sm$-module. There exists a finitely generated flat $\Sm$-module $F$ containing $M$ such that $t_0(F/M) <  t_0(M)$. 
        \item Let $M$ be a finitely generated torsion-free $S$-module. There exists a finitely generated flat $S$-module $F$ containing $M$ such that $t_0(F/M) < t_0(M)$. 
    \end{enumerate}
\end{proposition}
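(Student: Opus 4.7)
My overall plan is to treat the six items as a linked package, with (1) providing the basic structure of torsion-free injectives, (2)--(3) handling the torsion case, (4) combining both via a finiteness input, and (5)--(6) using (1) as a flat approximation of torsion-free modules. For part (1), I would establish an adjunction of the form $\Hom_S(N, S \otimes I) \cong \Hom_{\GL}(N, I)$ for every $S$-module $N$ — coming from the fact that $S \otimes -$ is the extension-of-scalars functor left adjoint to restriction along $k \hookrightarrow S$, and that in the polynomial-functor world this adjoint also computes morphisms in the opposite direction due to the self-dual/Gorenstein nature of $S$. Combined with exactness in $N$, this yields $\Ext^i_S(-, S \otimes I) \cong \Ext^i_{\GL}(-, I) = 0$ for $i \geq 1$, giving injectivity. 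For the converse classification, given a torsion-free injective $M$, I would take $I \subset M$ to be a maximal $\GL$-subrepresentation (concretely the ``bottom/socle'' piece arising from the $\bN$-grading) and, using injectivity of $I$ as a $\GL$-representation, compare $M$ with $S \otimes I$ via the essentiality of the embedding.

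For (2)--(3), I would argue by contradiction. Given a torsion module $M$ with injective envelope $E(M)$, suppose $\Gamma(E(M)) \subsetneq E(M)$. Then the quotient $E(M)/\Gamma(E(M))$ is a nonzero torsion-free module, and by (1) applied in the appropriate category, its injective envelope has the form $S \otimes I$ or $\Sm \otimes I$. The composite $M \hookrightarrow E(M) \twoheadrightarrow E(M)/\Gamma(E(M)) \hookrightarrow S \otimes I$ then maps the torsion module $M$ into a torsion-free one, forcing it to be zero; but this contradicts essentiality of $M \subset E(M)$. Lemma~\ref{lem:locannihilator} is what lets us pin down ``torsion'' in a computable way here. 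For (4), I would build an injective resolution $T(M) \to E^0 \to E^1 \to \cdots$ of the torsion part inside $\Mod_S$; by (3) each $E^i$ is torsion, and by a soft argument each can be chosen finitely generated. Termination after finitely many steps is the crux and requires the finite generation of $\rR\Gamma$ on finitely generated modules, i.e.\ Theorem~\ref{thm:lcfinitenessS}.

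For (5) and (6), embed $M \hookrightarrow E(M) \cong S \otimes I$ (respectively $\Sm \otimes I$) using (1), and take $F$ to be $S \cdot I_0$ (respectively $\Sm \cdot I_0$) for a carefully chosen finitely generated $\GL$-subrepresentation $I_0 \subset I$ containing the images of a minimal generating set of $M$. Such an $F$ is flat by construction and contains $M$. The inequality $t_0(F/M) < t_0(M)$ is the delicate step: one must show that the ``new'' elements introduced by passing from $M$ to $F$ all come from top-degree relations on $M$ which, by minimality of the generating set and torsion-freeness of $M$, sit in strictly smaller $\bN$-degree than the generators of $M$. I expect this degree bound, together with the classification of torsion-free injectives in (1), to be the principal obstacle; once these structural pieces are in place the remaining items follow by standard homological bookkeeping together with the finite generation input of Theorem~\ref{thm:lcfinitenessS}.
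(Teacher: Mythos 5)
The paper handles items (1)--(4) entirely by citation to \cite{ganglp} and only gives genuine arguments for (5) and (6), which it proves via the shift theorem: one maps $M$ into $\mShq^n(M)$ (respectively $\Sh_n(M)$) by multiplication by $(y_1\cdots y_n)^{q/p}$ (respectively $y_1^n$), uses torsion-freeness to get injectivity, and the shift theorem to get flatness of the target for $n\gg 0$ and the drop in generating degree of the cokernel. Your proposal does not use the shift theorem at all, and the routes you sketch have several real gaps.

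For (1), the adjunction you invoke is backwards: $S\otimes -$ is extension of scalars, hence \emph{left} adjoint to restriction; the natural Hom identity you want, $\Hom_S(N,S\otimes I)\cong\Hom_{\GL}(N,I)$, would require $S\otimes I$ to be the \emph{coinduced} module $\Hom_k(S,I)$. The appeal to a ``self-dual/Gorenstein'' feature to flip the adjunction is exactly the nontrivial content, and you never establish it; in fact in characteristic $p$ this is delicate and is essentially what \cite{ganglp} proves. For (2)--(3), your contradiction does not close: the composite $M\hookrightarrow E(M)\twoheadrightarrow E(M)/\Gamma(E(M))\hookrightarrow S\otimes I$ being zero merely restates $M\subset\Gamma(E(M))$, which you already know; it does not produce a nonzero submodule of $E(M)$ meeting $M$ trivially, so essentiality is never contradicted. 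The correct mechanism (annihilators propagating from $m\in Sx\cap M$ to $x$) is not in your sketch. For (4) you have misread the statement: $T$ is the Serre quotient functor $\Mod_S\to\Mod_S^{\gen}$, so $T(M)$ is the image of $M$ in the generic category, not ``the torsion part.'' Your argument about torsion injectives $E^i$ is aimed at the wrong object; the relevant injectives in $\Mod_S^{\gen}$ are (images of) the torsion-free injectives $S\otimes I$. Moreover invoking Theorem~\ref{thm:lcfinitenessS} here is logically risky: that theorem is proved downstream from the present proposition (via Proposition~\ref{prop:semilcvanishing}, which uses item (6)), so you would need to check very carefully that no circularity is introduced.

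For (5)--(6), the construction ``take $I_0\subset I$ containing the images of a minimal generating set of $M$'' is not well posed: under $M\hookrightarrow S\otimes I$ the generators of $M$ land in $\bigoplus_{j}S_j\otimes I_{d-j}$, not in $1\otimes I$, so there is no canonical $\GL$-subrepresentation of $I$ ``containing them.'' Even after fixing this (e.g.\ choosing $I_0$ minimal so that $S\otimes I_0\supset M$), the inequality $t_0(F/M)<t_0(M)$ is not forced by ``minimality and torsion-freeness''; $t_0(I_0)$ may well exceed $t_0(M)$. The paper sidesteps all of this by using the shift theorem, which hands you a flat overmodule $\mShq^n(M)\supset M$ together with a built-in degree drop on the cokernel (via \cite[Proposition~5.2]{ganglp}). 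That structural input is the actual engine of (5)--(6), and your sketch has nothing playing its role.
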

\begin{proof}
\leavevmode
    (1) is \cite[Theorem~4.25(2)]{ganglp}; (2) is \cite[Proposition~3.9]{ganglp}; (3) is \cite[Proposition~3.9]{ganglp}; (4) is \cite[Theorem~4.25(4)]{ganglp}.
    
    (5) follows by the shift theorem for $\Sm$-modules (\cite[Theorem~E]{ganglp}). Indeed, by the shift theorem, the $\Sm$-module $\mShq^n(M)$ is flat for $n \gg 0$. The natural map $M \to \mShq^n(M)$ given by $m \mapsto (y_1y_2\ldots y_n)^{q/p} m$ is injective since its kernel is $\Sm$-torsion (\cite[Proposition~5.2(a)]{ganglp}) and its cokernel is generated in smaller degree (\cite[Proposition~5.2(a)]{ganglp}).
    
    The proof of (6) is similar to that of (5) but instead we use the shift theorem for $S$-modules (\cite[Theorem~6.4]{ganglp}) and define the map $M \to \Sh_n(M)$ by $m \mapsto y_1^n m$. (It is easy to check that this map satisfies the requisite properties from \cite[Proposition~5.2(a) \& (e)]{ganglp}.)
\end{proof}

\subsection{Technical lemma about local cohomology}
Given an ideal $\fa \subset A$ and $A$-module $M$, let $\Gamma_{\fa}(M)$ be the maximal submodule of $M$ locally annihilated by a power of $\fa$. We let the \textit{$i$-th local cohomology} be the right derived functor $\rR^i\Gamma_{\fa}$. In this subsection, we prove a technical result which allows us to ``spread out" finiteness results about local cohomology from a closed subset to its infinitesimal thickening. The upshot is that we can extend a (specific) semiorthogonal decomposition from the subcategory of $A$-modules scheme-theoretically supported on $\fa$ to the subcategory of modules set-theoretically supported on $\fa$.

Let $\Mod_A^{\tors}$ be the \textit{torsion category} of torsion $A$-modules. The subcategory $\Mod_A^{\tors} \subset \Mod_A$ is a localizing subcategory. We let $\Mod_A^{\gen} \coloneqq \Mod_A/\Mod_A^{\tors}$
be the \textit{generic category}.

Given an ideal $\fa \subset A$, we let $\Mod_A[\fa^{\infty}]$ be the localizing subcategory of modules locally annihilated by a power of $\fa$, and let $\Mod_{A}[\fa]$ be the subcategory of modules annihilated by $\fa$ (this is the same as $\Mod_{A/\fa}$).

\begin{defn}[Section~4 in \cite{ss19gl2}] 
A Serre subcategory $\cB \subset \cA$ satisfies property (Inj) if injective modules in $\cB$ remain injective in $\cA$ 
\end{defn}

\begin{example}
    Let $\fa = \fm^{[q]}$ for some prime power $q$. The Serre subcategory $\Mod_S[\fa^{\infty}] \subset \Mod_S$ satisfies Property (Inj) by Proposition~\ref{prop:masterprop}.
    \end{example}
 
\begin{proposition}\label{prop:restrofsection}
    Let $\fa \subset \fb$ be $\GL$-prime ideals of $A$, and consider the two Serre quotients 
    \[\overline{T} \colon \Mod_{A}[\fa] \to \Mod_A[\fa]/((\Mod_A[\fa])[\fb^{\infty}]) = \colon \cC\]
    and
    \[ T \colon \Mod_A[\fa^{\infty}] \to \Mod_A[\fa^{\infty}]/\Mod_A[\fb^{\infty}] = \colon \cD \] with right adjoints $\overline{S}$ and $S$ respectively. Assume that the Serre subcategories $\Mod_A[\fb^{\infty}] \subset \Mod_A[\fa^{\infty}]$ and $(\Mod_A[\fa])[\fb^{\infty}] \subset \Mod_A[\fa]$ satisfy property (Inj). Denote by $\iota$ the inclusion functor $\Mod_A[\fa] \to \Mod_A[\fa^{\infty}]$ and $\overline{\iota}$ the induced functor $\cC \to \cD$. Then 
    \begin{enumerate}
    \item $\overline{\iota}$ is a fully faithful functor, and 
    \item the derived functor $\rR {S} \circ \overline{\iota}$ is naturally isomorphic to $\iota \circ \rR \overline{S} $
    \end{enumerate}
\end{proposition}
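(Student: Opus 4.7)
For part (1), my plan is to invoke Gabriel's filtered-colimit description of morphisms in a Serre quotient. For any $M, N \in \Mod_A[\fa]$, $\Hom_{\cC}(\overline{T}M, \overline{T}N)$ is the filtered colimit of $\Hom_{\Mod_A[\fa]}(M', N/N'')$ over pairs $M' \subset M$, $N'' \subset N$ with $M/M'$ and $N''$ in $\overline{\cB} \coloneqq (\Mod_A[\fa])[\fb^{\infty}]$; similarly $\Hom_{\cD}(T\iota M, T\iota N)$ is a colimit over the analogous diagram, now with $\cB \coloneqq \Mod_A[\fb^{\infty}]$ in place of $\overline{\cB}$. Since every subobject and every quotient of an object of $\Mod_A[\fa]$ is still annihilated by $\fa$, the conditions ``$M/M' \in \cB$'' and ``$M/M' \in \overline{\cB}$'' coincide when $M \in \Mod_A[\fa]$, so the two indexing diagrams are literally the same. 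Full-faithfulness of $\iota$ identifies the Hom groups termwise, yielding (1).

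For part (2), the plan is first to check the underived isomorphism $S \circ \overline{\iota} \cong \iota \circ \overline{S}$ and then upgrade it by comparing local-cohomology triangles. A $\fb^{\infty}$-locally-torsion element of $\iota\overline{S}(\overline{Y})$ is annihilated by $\fa$ and hence is $\overline{\cB}$-torsion, which forces it to be zero by $\overline{\cB}$-torsion-freeness of $\overline{S}(\overline{Y})$; a parallel argument handles $\cB$-saturation, and uniqueness of saturation gives $S\overline{\iota} \cong \iota\overline{S}$. To derive, I would work with the distinguished triangles
\[\rR\Gamma_{\overline{\cB}}(M) \to M \to \rR\overline{S}\overline{T}(M) \to\]
in $\rD^+(\Mod_A[\fa])$ and
\[\rR\Gamma_{\cB}(\iota M) \to \iota M \to \rR ST(\iota M) \to\]
in $\rD^+(\Mod_A[\fa^{\infty}])$, whose existence and interaction with injective resolutions rest on the two property (Inj) hypotheses. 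Applying $\iota$ to the former and comparing with the latter reduces the claim to producing a natural isomorphism $\iota \circ \rR\Gamma_{\overline{\cB}} \cong \rR\Gamma_{\cB} \circ \iota$ of $\delta$-functors.

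This comparison is to be established by a universal $\delta$-functor argument: the two $\delta$-functors agree in degree zero (by the same identification used in part (1)), and the left side is effaceable on injectives of $\Mod_A[\fa]$. So it suffices to show that $\rR^i\Gamma_{\cB}(\iota I) = 0$ for all $i > 0$ whenever $I$ is injective in $\Mod_A[\fa]$. Using property (Inj) for $\overline{\cB} \subset \Mod_A[\fa]$, decompose $I = \Gamma_{\overline{\cB}}(I) \oplus J$ with $J$ torsion-free injective. The torsion summand lies in $\overline{\cB} \subset \cB$ and is injective in $\overline{\cB}$; using property (Inj) for $\cB \subset \Mod_A[\fa^{\infty}]$, any injective resolution in $\cB$ serves as a $\Gamma_{\cB}$-acyclic resolution in $\Mod_A[\fa^{\infty}]$, so the desired vanishing holds on this piece.

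The main obstacle, and the step I expect to require genuine care, is the torsion-free summand $J$: it is $\cB$-torsion-free, but we need $J$ to be \emph{derived} $\cB$-saturated inside $\Mod_A[\fa^{\infty}]$. My plan is to embed $\iota J$ into an injective hull $E^0$ of $\Mod_A[\fa^{\infty}]$, split off the $\cB$-torsion part of $E^0$ via property (Inj) for $\cB$, and then argue that the cokernel remains $\fa$-annihilated and is controlled by the $\overline{\cB}$-saturation of $J$; iterating yields an injective resolution of $\iota J$ whose image under $\Gamma_{\cB}$ is concentrated in degree zero. Interlocking the two property (Inj) hypotheses along this resolution is the essential content of the proposition.
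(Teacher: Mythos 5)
Your part (1) is fine and runs parallel to the paper's (which cites the analogous \cite[Proposition~6.1]{ss19gl2}): the point in both treatments is that torsion conditions on $\fa$-annihilated modules don't see the difference between $\overline{\cB} = (\Mod_A[\fa])[\fb^\infty]$ and $\cB = \Mod_A[\fb^\infty]$, so the colimit systems computing the two $\Hom$ groups coincide.

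For part (2) your reduction is sound in outline (the two local-cohomology triangles, the $\delta$-functor comparison, and the reduction to acyclicity of injectives), and you have correctly isolated the crux: showing that a torsion-free injective $J$ of $\Mod_A[\fa]$ satisfies $\rR^i\Gamma_{\cB}(\iota J)=0$ for $i>0$. But your sketch for this step contains a genuine gap. When you embed $\iota J$ into an injective hull $E^0$ of $\Mod_A[\fa^\infty]$, the cokernel $E^0/\iota J$ has no reason to be $\fa$-annihilated — $E^0$ itself is not annihilated by $\fa$ (for instance, the injective hull of $A/\fa$ in the larger torsion category is typically supported on an infinitesimal thickening of $V(\fa)$). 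So "the cokernel remains $\fa$-annihilated" is false, and the iteration you describe never produces a resolution you can control with the two (Inj) hypotheses. As written, the essential content of the proposition is still open at the end of your proposal, and you yourself flag it as such.

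The paper closes exactly this gap by a different, shorter device: instead of building resolutions, one shows directly that an injective $I$ of $\cC$ (equivalently, a torsion-free injective of $\Mod_A[\fa]$) is $S$-acyclic, by computing for $T\in\cB$
\[
  \rR\Hom_A\bigl(T,\,S(I)\bigr)\;\cong\;\rR\Hom_{A/\fa}\bigl(T\otimes^{\rL}_A A/\fa,\,S(I)\bigr),
\]
using that $S(I)\cong\overline{S}(I)$ is injective over $A/\fa$ to split this into $\Hom_{A/\fa}(\Tor_i^A(T,A/\fa),S(I))$, and then observing that each $\Tor_i^A(T,A/\fa)$ lies in $\overline{\cB}$ while $S(I)$ is $\overline{\cB}$-saturated, so every term vanishes. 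The derived adjunction (change of rings along $A\to A/\fa$) is the ingredient your approach is missing; I'd suggest replacing the injective-hull iteration with this computation, which immediately gives the acyclicity you need and lets you finish via the argument you already set up.
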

\begin{proof}
The first part, that $\overline{i}$ is a fully faithful functor, is essentially because $A$-module homomorphisms between two $A/\fa$-modules is the same as $A/\fa$-module homomorphisms between them; see proof of \cite[Proposition~6.1]{ss19gl2} for details. For the rest of the proof, we will identify $\Mod_A[\fa]$ and $\cC$ as a subcategory of $\Mod_A[\fa^{\infty}]$ and $\cD$ respectively from which it is clear that $\overline{T}$ is the restriction of $T$ to $\Mod_A[\fa]$, and so $\overline{S}$ is the restriction of $S$ to $\cC$ by uniqueness of right adjoint. 

We now prove the second part. Let $I$ be an injective in $\cC$. We show first that $I$ is ${S}$-acyclic in $\cD$, which is equivalent to vanishing of $\rR \Hom_A(T, S(I))$ for all $T$ in $\Mod_A[\fb^{\infty}]$ by \cite[Proposition~4.7]{ss19gl2}. Using derived adjunction, we obtain an isomorphism 
\[\rR \Hom_A(T,S(I))=\rR \Hom_{A/\fa}(T \stackrel{\rL}{\otimes}_A A/\fa, S(I)).\]
Since $I$ is injective in $\cC$, the $A/\fa$-module $S(I) \cong \overline{S}(I)$ is injective in $\Mod_{A/\fa}$, and so the above isomorphism can be rewritten as $\ext^i_A(T, S(I)) \cong \Hom_{A/\fa}(\Tor_i^A(T, A/\fa), S(I))$ for all $i\geq 0$. The $A$-module $T$ is annihilated by a power of $\fb$, so its derived functors $\Tor_i^A(T, A/\fa)$ also lie in the subcategory $(\Mod_A[\fa])[\fb^{\infty}]$. But $\Hom_{A/\fa}(T', S(I)) = 0$ for all $T'$ in $(\Mod_A[\fa])[\fb^{\infty}]$ since $S(I)$ is evidently saturated with respect to $\Mod_A[\fa][\fb^{\infty}]$.

Given an object $M \in \cC$, we may take an injective resolution $M \to I^{\bullet}$ in $\cC$. Applying $\overline{S}$ and taking homology computes $\rR \overline{S}$, but the same procedure also computes $\rR S$ as $S \cong \overline{S}$ and injective objects in $\cC$ are $S$-acyclic in $\cD$ by the previous paragraph.
\end{proof}
\begin{corollary}\label{cor:devissagelcfiniteness}
    With the assumptions and notations of the previous proposition, if $\rR \overline{S}$ preserves the bounded derived category $\rD^b_{\fgen}(\cC) \to \rD^b_{\fgen}(\Mod_A[\fa])$, then $\rR S$ also maps the bounded derived category $\rD^b_{\fgen}(\cD)$ to $\rD^b_{\fgen}(\Mod_a[\fa^{\infty}])$.
\end{corollary}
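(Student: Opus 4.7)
My plan is to deduce this from Proposition~\ref{prop:restrofsection}(2) by a dévissage argument that exploits the fact that every finitely generated module locally annihilated by a power of $\fa$ is in fact annihilated by a fixed power $\fa^n$. The key point is that the isomorphism $\rR S \circ \overline{\iota} \cong \iota \circ \rR \overline{S}$ directly handles objects lifted from $\cC$, and the finite filtration by powers of $\fa$ upstairs will let me bootstrap this to all of $\rD^b_{\fgen}(\cD)$.

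Concretely, I will introduce the full subcategory $\cE \subset \rD^b_{\fgen}(\cD)$ consisting of those $X$ for which $\rR S(X) \in \rD^b_{\fgen}(\Mod_A[\fa^{\infty}])$; this is a thick triangulated subcategory, so it suffices to exhibit a generating set for $\rD^b_{\fgen}(\cD)$ inside $\cE$. For any finitely generated $N \in \Mod_A[\fa]$, Proposition~\ref{prop:restrofsection}(2) gives
\[\rR S(\overline{\iota}(\overline{T}(N))) \cong \iota(\rR \overline{S}(\overline{T}(N))),\]
and since the right-hand side lies in $\rD^b_{\fgen}(\Mod_A[\fa])$ by hypothesis while $\iota$ is an exact inclusion preserving finite generation, this already places $\overline{\iota}(\rD^b_{\fgen}(\cC)) \subset \cE$.

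The core of the argument will be to show that $\overline{\iota}(\rD^b_{\fgen}(\cC))$ generates $\rD^b_{\fgen}(\cD)$ as a thick triangulated subcategory. Using the standard cohomology filtration of a bounded complex, this reduces to treating single objects $X = T(M)$ with $M \in \Mod_A[\fa^{\infty}]$ finitely generated. Finite generation combined with local $\fa$-torsion forces $\fa^n M = 0$ for some $n$, so the finite filtration
\[M \supset \fa M \supset \fa^2 M \supset \cdots \supset \fa^n M = 0\]
has successive quotients annihilated by $\fa$, and hence lying in the image of $\iota$. Applying the exact functor $T$ and using the evident compatibility $T \circ \iota \cong \overline{\iota} \circ \overline{T}$ (implicit in the proof of Proposition~\ref{prop:restrofsection}), each successive quotient of the filtration of $T(M)$ in $\cD$ sits inside $\overline{\iota}(\cC)$. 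Thus $T(M)$ is a finite iterated extension of objects of $\overline{\iota}(\cC)$, which closes the dévissage.

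The step I anticipate will require the most care is the reduction to modules of the form $T(M)$ with $M$ finitely generated in $\Mod_A[\fa^{\infty}]$: one has to verify that the cohomology of a complex representing an object of $\rD^b_{\fgen}(\cD)$ can always be realized this way. This is a general feature of Serre quotients of Grothendieck categories with a reasonable notion of finite generation, and should follow from the exactness and essential surjectivity of $T$ together with the lifting of finitely generated objects across the quotient, but I would want to pin down the $\fgen$ convention used elsewhere in the paper before invoking it without comment.
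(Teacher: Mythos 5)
Your proposal is correct and is exactly the dévissage the paper has in mind: the paper's one-sentence proof ("standard dévissage arguments since an object in $\cD$ has a finite filtration by objects in $\cC$") is precisely the $\fa$-adic filtration $M \supset \fa M \supset \cdots \supset \fa^n M = 0$ that you spell out, combined with the thick-subcategory/cohomology-filtration reduction. The caveat you raise at the end (lifting finitely generated objects across the Serre quotient $T$) is indeed the standard fact being invoked implicitly, and your reconstruction fills in the details faithfully.
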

\begin{proof} The result follows by the previous proposition using standard d{\'e}vissage arguments since an object in $\cD$ has a finite filtration by objects in $\cC$.
\end{proof}

\section{Proof of Theorem~\ref{thm:semiorth}}\label{s:derived}
Recall that a semi-orthogonal decomposition is the closest non-degenerate analogue of a direct sum decomposition for triangulated categories. More precisely, 
\begin{defn}
Given a triangulated category $\cT$, a \textit{semi-orthogonal decomposition} 
\[\cT = \langle \cT_1, \cT_2, \ldots, \cT_n, \ldots, \rangle\]
is a collection of full triangulated subcategories $\cT_i$ such that the smallest triangulated subcategory generated by the collection is $\cT$, and for all objects $x \in \cT_i$ and $y \in \cT_j$ with $i < j$, we have $\Hom_{\cT}(x, y) = 0$. 
\end{defn}
We caution that the convention in \cite[Section~4]{ss19gl2}, which we follow, is ``opposite" to most definitions in the literature (in that usually one disallows maps $x \to y$ with $x\in \cT_i$ and $y \in \cT_j$ when $i > j$). 
\begin{remark}
    Theorem~\ref{thm:semiorth} is a refinement of \cite[Theorem~B]{ganglp} which states that $\rD^b_{\fgen}(\Mod_S)$ is generated by the $\GL$-twists of $S$ and $\Sm$ with $q$ allowed to vary.
\end{remark}
We prove Theorem~\ref{thm:semiorth} in three steps: first, we obtain a coarse semi-orthogonal decomposition (Proposition~\ref{prop:coarsesemiS}) for $\rD^b_{\fgen}(\Mod_S)$ into $\rD^b_{\fgen}(\Mod_S^{\tors})$ and $\rD^b_{\fgen}(\Mod_S^{\gen})$ using results about torsion-free injective objects in $\Mod_S$ (Proposition~\ref{prop:masterprop}(1) and (4)); second, we prove a similar decomposition (Corollary~\ref{cor:coarsesemiSm}) for $\rD^b_{\fgen}(\Mod_{\Sm})$ using Proposition~\ref{prop:masterprop}(5); finally, we use Corollary~\ref{cor:devissagelcfiniteness} to extend the second part into the requisite decomposition of $\rD^b_{\fgen}(\Mod_S^{\tors})$. 

\subsection{Decomposing \texorpdfstring{$\Mod_S$}{Mod-S} into torsion and flat modules}\label{ss:mods}

Let $\Gamma \colon \Mod_S \to \Mod_S^{\tors}$ be the functor which assigns the torsion submodule; it is right adjoint to the inclusion functor $\Mod_S^{\tors} \to \Mod_S$.

\begin{proposition}\label{prop:semilcvanishing}
   Let $M$ be a finitely generated $S$-module. Then $M$ is semi-induced if and only if $\rR\Gamma(M) = 0$. 
\end{proposition}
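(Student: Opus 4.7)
I would prove the two directions separately.

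\emph{Forward direction} (semi-induced $\Rightarrow \rR\Gamma(M) = 0$). If $M$ is semi-induced, fix a finite filtration whose graded pieces are of the form $S \otimes L_\lambda$. Using the long exact sequence of $\rR\Gamma$ and induction on the length of this filtration, the task reduces to verifying $\rR^i\Gamma(S \otimes L_\lambda) = 0$ for every $i \geq 0$ and every partition $\lambda$. The plan is to pick an injective resolution $L_\lambda \hookrightarrow I^\bullet$ in the category of polynomial $\GL$-representations. Since $S$ is flat as a $k$-vector space, tensoring yields a resolution $S \otimes L_\lambda \to S \otimes I^\bullet$ in $\Mod_S$, and Proposition~\ref{prop:masterprop}(1) says each term $S \otimes I^j$ is injective. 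Each of these injectives is moreover torsion-free (as a free $S$-module, after forgetting the $\GL$-action), so $\Gamma$ kills every term and $\rR^i\Gamma(S \otimes L_\lambda) = 0$ as required.

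\emph{Backward direction} ($\rR\Gamma(M) = 0 \Rightarrow M$ semi-induced). I would induct on the generation degree $t_0(M)$; the case $M = 0$ is vacuous. For the inductive step, the hypothesis forces $\Gamma(M) = \rR^0\Gamma(M) = 0$, so $M$ is torsion-free, and Proposition~\ref{prop:masterprop}(6) produces a finitely generated flat $F \supset M$ with $t_0(F/M) < t_0(M)$. Lemma~\ref{lem:flat} makes $F$ semi-induced, so the forward direction gives $\rR\Gamma(F) = 0$. The long exact sequence of $\rR\Gamma$ applied to $0 \to M \to F \to F/M \to 0$ then forces $\rR\Gamma(F/M) = 0$, and the induction hypothesis gives that $F/M$ is semi-induced. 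Finally, evaluating the short exact sequence at each finite-dimensional $V$ yields a short exact sequence of finitely generated graded $\Sym(V)$-modules whose outer terms are free; the Tor long exact sequence then shows $M(V)$ is flat, and graded Nakayama promotes this to freeness. So $M$ is flat in the pointwise sense of Lemma~\ref{lem:flat}, which in turn upgrades it to being semi-induced.

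\emph{Main obstacle.} The harder direction is clearly the backward one. The essential input is the existence of a flat cover with strictly smaller generation degree (Proposition~\ref{prop:masterprop}(6)), which is what powers the induction on $t_0$. The remaining technicality is transferring flatness through the short exact sequence $0 \to M \to F \to F/M \to 0$ and then re-entering the semi-induced world via Lemma~\ref{lem:flat}; both steps are routine once the flat cover is available. Everything else is formal homological algebra applied to the derived functor $\rR\Gamma$.
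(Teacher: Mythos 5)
Your proposal is correct and follows the paper's proof essentially verbatim: the forward direction reduces to $\rR\Gamma(S\otimes L_\lambda)=0$ by tensoring an injective resolution of $L_\lambda$ in $\Rep^{\pol}(\GL)$ with $S$ and using Proposition~\ref{prop:masterprop}(1), and the backward direction is an induction on $t_0(M)$ using the flat cover from Proposition~\ref{prop:masterprop}(6) together with the long exact sequence of $\rR\Gamma$. One small slip in your closing sentences: in the evaluated short exact sequence $0 \to M(V) \to F(V) \to (F/M)(V) \to 0$, the free terms are $F(V)$ and $(F/M)(V)$, i.e.\ the middle and right terms, not the ``outer'' ones --- $M(V)$ is what the Tor long exact sequence then shows to be flat (hence free by graded Nakayama), exactly as you go on to argue.
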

\begin{proof}
    For the only if direction, it suffices to show that $\rR\Gamma(M) = 0$ for an induced $S$-module. Consider the induced module $S \otimes L_{\lambda}$. Given an injective resolution $L_{\lambda} \to I_0 \to I_1 \to \ldots \to I_r \to 0$ of $L_{\lambda}$ in $\Rep^{\pol}(\GL)$, we obtain an injective resolution
    \[ 0 \to S \otimes L_{\lambda} \to S \otimes I_0 \to \ldots \to S \otimes I_r \to 0. \]
   We get the zero complex upon applying $\Gamma$ to the above resolution, so $\rR\Gamma(S \otimes L_{\lambda}) = 0$. 

    The proof of the if direction is a fairly standard argument using Proposition~\ref{prop:masterprop}(6) which we reprint here since we use it several times in this paper. Assume the conclusion holds when $t_0(M) < n$ and let $t_0(M) = n$. Since $\rR\Gamma(M) = 0$, the module $M$ is torsion-free so Proposition~\ref{prop:masterprop} applies and we obtain a short exact sequence $0 \to M \to F \to F/M \to 0$ with $F$ finitely generated flat and $t_0(F/M) < t_0(M)$. By the previous paragraph $\rR\Gamma(F) = 0$ so $\rR\Gamma(F/M) = 0$ which implies $F/M$ is flat by induction on $t_0$. Therefore $M$ is also flat being the kernel of a surjective map between two flat modules.
\end{proof}

\begin{theorem}[Finiteness of Local Cohomology for $S$-modules]\label{thm:lcfinitenessS}
   Let $M$ be a finitely generated $S$-module. The $S$-module $\rR^i\Gamma(M)$ is finitely generated for all $i$ and vanishes for sufficiently large $i$. 
\end{theorem}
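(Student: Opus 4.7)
The plan is to prove both finite generation and eventual vanishing of $\rR^i\Gamma(M)$ simultaneously, by induction on the generation degree $t_0(M)$. For the inductive step, I would first extract the torsion submodule via the short exact sequence $0 \to \Gamma(M) \to M \to M' \to 0$, where $\Gamma(M)$ is finitely generated (by noetherianity of $\Mod_S$) and torsion, while $M'$ is finitely generated and torsion-free. The long exact sequence in $\rR^i\Gamma$ then reduces matters to analyzing $\rR\Gamma$ on each piece separately.

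For the torsion piece, the idea is to apply Proposition~\ref{prop:masterprop}(3): since injective envelopes of torsion modules are torsion (and quotients of torsion modules are obviously torsion), one can iteratively build an injective resolution $T \to I^\bullet$ of a torsion module $T$ entirely within the torsion subcategory. Since $\Gamma$ acts as the identity on torsion injectives, applying it to this resolution returns the resolution itself, yielding $\rR^0\Gamma(T) = T$ and $\rR^i\Gamma(T) = 0$ for $i > 0$. Applied to $T = \Gamma(M)$, this gives the desired conclusion on the torsion piece with no appeal to the induction hypothesis.

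For the torsion-free piece $M'$, the key input is Proposition~\ref{prop:masterprop}(6), which produces a finitely generated flat $S$-module $F$ containing $M'$ with $t_0(F/M') < t_0(M')$. Since finitely generated flat modules are semi-induced (Lemma~\ref{lem:flat}), Proposition~\ref{prop:semilcvanishing} gives $\rR\Gamma(F) = 0$. Hence the long exact sequence for $0 \to M' \to F \to F/M' \to 0$ collapses to isomorphisms $\rR^{i+1}\Gamma(M') \cong \rR^i\Gamma(F/M')$ for $i \geq 0$, together with $\rR^0\Gamma(M') = 0$. Since $t_0(F/M') < t_0(M') \leq t_0(M)$, the induction hypothesis applies to $F/M'$, giving that each $\rR^i\Gamma(F/M')$ is finitely generated and eventually zero. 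Combining the two pieces through the long exact sequence of the original short exact sequence then finishes the inductive step.

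The main conceptual obstacles have already been overcome in the structural results being quoted: noetherianity of $\Mod_S$ (giving that $\Gamma(M)$ is finitely generated), closure of torsion under injective envelopes (giving vanishing of $\rR^{>0}\Gamma$ on torsion), and the ability to embed torsion-free modules in flat ones with strictly smaller generation degree (enabling the $t_0$-induction). One subtle point worth flagging is that the induction must be stated for \emph{all} finitely generated modules rather than only torsion-free ones, since the quotient $F/M'$ produced by Proposition~\ref{prop:masterprop}(6) is not guaranteed to be torsion-free. Once this is set up correctly, the proof is essentially bookkeeping with long exact sequences.
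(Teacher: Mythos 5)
Your proof is correct, and it uses the same two computational ingredients as the paper's: $\rR\Gamma(T)=T$ for torsion $T$ (via Proposition~\ref{prop:masterprop}(3), injective envelopes of torsion modules are torsion), and $\rR\Gamma(F)=0$ for flat $F$ (via Lemma~\ref{lem:flat} and Proposition~\ref{prop:semilcvanishing}). Where you diverge is in how these are assembled. The paper's proof is a one-liner: it invokes \cite[Theorem~B]{ganglp}, which says that torsion modules and flat modules together generate $\rD^b_{\fgen}(\Mod_S)$ as a triangulated category, and then observes that the class of complexes with finitely generated, eventually-vanishing $\rR\Gamma$-cohomology is a triangulated subcategory containing those generators. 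You instead carry out the d\'evissage explicitly by inducting on $t_0(M)$: split $M$ into $\Gamma(M)$ and the torsion-free quotient $M'$, embed $M'$ in a flat $F$ with $t_0(F/M') < t_0(M')$ via Proposition~\ref{prop:masterprop}(6), and use the two long exact sequences. This is essentially a re-derivation, specialized to the functor $\rR\Gamma$, of the argument underlying the generation statement in \cite[Theorem~B]{ganglp}. The tradeoff is that your version is more self-contained and makes the mechanism transparent, at the cost of re-running an induction the paper has already packaged; the paper's version is shorter but defers the real work to the cited theorem. Your flag about stating the induction for all finitely generated modules (not just torsion-free ones) is exactly right, since $F/M'$ need not be torsion-free, and the check that $t_0(F/M') < t_0(M') \le t_0(M)$ is what makes the induction go through.
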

\begin{proof}
        By Property (Inj), we have $\rR\Gamma(T) = T$ for a torsion $S$-module. By the previous Proposition, we get $\rR\Gamma(F) = 0$ for a flat $S$-module. These two classes generate $\rD^b_{\fgen}(\Mod_S)$ by \cite[Theorem~B]{ganglp} whence we obtain the result.
\end{proof}

We obtain our first semi-orthogonal decomposition of $\rD^b_{\fgen}(Mod_S)$.
\begin{proposition}\label{prop:coarsesemiS}
  We have a semi-orthogonal decomposition
    \begin{displaymath}
        \rD^b_{\fgen}(\Mod_{S}) = \langle  
        \rD^b_{\fgen}(\Mod_{S}^{\tors}),
        \rD^b_{\fgen}(\Mod_{S}^{\gen}) 
        \rangle,
    \end{displaymath} 
    where we identify $\rD^b_{\fgen}(\Mod_{S}^{\gen})$ as a subcategory of $\rD^b_{\fgen}(\Mod_{S})$ using the (derived) right adjoint to the Serre quotient functor $\Mod_{S} \to \Mod_{S}^{\gen}$.
\end{proposition}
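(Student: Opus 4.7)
The strategy is to invoke the general formalism for semi-orthogonal decompositions from localizing subcategories satisfying property (Inj), as developed in \cite[Section~4]{ss19gl2}, and then to use Theorem~\ref{thm:lcfinitenessS} to descend from the unbounded (or $\rD^+$) statement to the bounded, finitely-generated one. As the first step, I would observe that $\Mod_S^{\tors} \subset \Mod_S$ satisfies property (Inj): this is immediate from Proposition~\ref{prop:masterprop}(3), which says that the injective envelope of a torsion $S$-module is again torsion.

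Once property (Inj) is available, the standard localization formalism produces, for each $M \in \rD(\Mod_S)$, a functorial distinguished triangle
\[ \rR\Gamma(M) \to M \to \rR S(T(M)) \to \rR\Gamma(M)[1], \]
where $T \colon \Mod_S \to \Mod_S^{\gen}$ is the Serre quotient and $\rR S$ is its derived right adjoint. This triangle is the gluing data for the desired semi-orthogonal decomposition. The decisive step, which I would expect to be the main obstacle were it not already in hand, is showing that the triangle remains in $\rD^b_{\fgen}$: for $M \in \rD^b_{\fgen}(\Mod_S)$, Theorem~\ref{thm:lcfinitenessS} gives $\rR\Gamma(M) \in \rD^b_{\fgen}(\Mod_S^{\tors})$, and $T(M)$ lies in $\rD^b_{\fgen}(\Mod_S^{\gen})$ trivially since $T$ is exact and preserves finite generation. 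Two-out-of-three applied to the triangle then places $\rR S(T(M))$ in $\rD^b_{\fgen}(\Mod_S)$; in particular, $\rR S$ does send $\rD^b_{\fgen}(\Mod_S^{\gen})$ into $\rD^b_{\fgen}(\Mod_S)$, so the identification in the statement is well-posed, and the triangle itself supplies the required generation.

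It remains to verify semi-orthogonality: for $A \in \rD^b_{\fgen}(\Mod_S^{\tors})$ and $B \in \rD^b_{\fgen}(\Mod_S^{\gen})$, exactness of $T$ yields a derived adjunction $T \dashv \rR S$, so that
\[ \rR \Hom_{\rD(\Mod_S)}(A, \rR S(B)) \cong \rR \Hom_{\rD(\Mod_S^{\gen})}(T(A), B), \]
and the right-hand side vanishes because $T$ annihilates every complex with torsion cohomology. Combined with generation, this completes the argument. As the remark following Theorem~\ref{thm:semiorth} emphasizes, the truly substantive point is the finite generation and bounded cohomological dimension of $\rR\Gamma$ on $\rD^b_{\fgen}$; once Theorem~\ref{thm:lcfinitenessS} is in hand, everything beyond it is formal localization-theoretic bookkeeping.
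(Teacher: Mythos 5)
Your proof is correct and follows the same route the paper takes: the paper simply cites Theorem~\ref{thm:lcfinitenessS} together with \cite[Proposition~4.15]{ss19gl2}, and your argument is an accurate unpacking of that citation (localization triangle from property (Inj), Theorem~\ref{thm:lcfinitenessS} plus two-out-of-three for boundedness, and the adjunction $T \dashv \rR S$ for semi-orthogonality).
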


\begin{proof}
   This is a corollary of Theorem~\ref{thm:lcfinitenessS}; see \cite[Proposition~4.15]{ss19gl2} for details.
\end{proof}

\subsection{Decomposing \texorpdfstring{$\Mod_{\Sm}$}{Mod-Sm} into torsion and flat modules}\label{ss:modsm}
Fix a prime power $q$ and let $\mGaq$ be functor which sends an $\Sm$-module to its torsion submodule. The functor $\mGaq$ is right adjoint to the inclusion functor $\Mod_{\Sm}^{\tors} \to \Mod_{\Sm}$. 

We recall some facts about the Hasse--Schur derivative $\mShq \colon \Mod_{\Sm} \to \Mod_{\Sm}$ from \cite[Section~5]{ganglp}. Recall we identify $\mShq(M)$ as the subspace of the $S(k \oplus \bV)$-module $M(k \oplus \bV)$ with the new variable $y_1$. We have a natural map $i\colon \id \to \mShq$ where $m \mapsto y_1^{q/p}m$ with $m \in M$; we also let $i[n] \colon \id \to \mShq^n$ be the map with $m \mapsto y_n^{q/p}y_{n-1}^{q/p} \ldots y_2^{q/p}y_1^{q/p}m$ using obvious identifications. The two functors $\mGaq$ and $\mShq$ are closely related:
\begin{lemma}[cf.~Proposition~5.1 in \cite{ganglp}]
    \leavevmode
    \begin{enumerate}
        \item An $\Sm$-module $M$ is torsion-free if and only if $\ker(i_M) = 0$.
        \item The functor $\mShq$ commutes with $\mGaq$.
        \item We have an isomorphism of functors $\mGaq \cong \colim_n \ker(i[n])$.
    \end{enumerate}
\end{lemma}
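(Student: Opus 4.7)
My plan is to prove (1) by identifying $\ker(i_M)$ with the torsion submodule $\mGaq(M)$; the claimed equivalence is then immediate. For the containment $\mGaq(M) \subseteq \ker(i_M)$, Lemma~\ref{lem:locannihilator}(2) gives that any torsion element $m$ is annihilated by some power $(\fm^{[q/p]})^r$. Since $y_1^{q/p}$ lies in the ideal $\fm^{[q/p]}$ of $\Sm(k \oplus \bV)$ (viewed through the embedding $\bV \hookrightarrow k \oplus \bV$ and appropriate $\GL(k \oplus \bV)$-rotations sending $y_1$ into the span of the $x_i$), a $\GL$-equivariance argument forces $y_1^{q/p} m = 0$. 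For the reverse inclusion, if $y_1^{q/p} m = 0$, then the $\GL$-orbit of $y_1^{q/p}$ generates a nonzero $\GL$-stable ideal in $\Sm(k \oplus \bV)$ that annihilates $m$, placing $m$ in $\mGaq(M)$.

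For (2), once $\ker(i_M) = \mGaq(M)$ is established, the commutation $\mShq \circ \mGaq \cong \mGaq \circ \mShq$ follows from naturality of $i$: the square relating $i_{\mShq(M)}$ and $\mShq(i_M)$ commutes, and taking kernels yields $\mShq(\mGaq(M)) = \mGaq(\mShq(M))$. The underlying point is that $\mShq$, being a variant of extension of scalars restricted to a distinguished subspace, preserves the class of torsion modules; one can also verify this directly by noting that $\fm^{[q/p]}$-annihilation is stable under the passage to $M(k \oplus \bV)$ and the subspace selection defining $\mShq$.

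For (3), I would first verify that the system $\{\ker(i[n])\}_n$ is directed under the natural inclusions induced by the factorization of $i[n+1]$ through $i[n]$. Each $\ker(i[n])$ lies in $\mGaq(M)$ by iterating part (1): an element killed by the product $y_n^{q/p} \cdots y_1^{q/p}$ is torsion. Conversely, any $m \in \mGaq(M)$ is annihilated by $(\fm^{[q/p]})^r$ for some $r$, and the same $\GL$-equivariance argument as in (1), applied sufficiently many times, produces an $n$ for which the specific product $y_n^{q/p} \cdots y_1^{q/p}$ annihilates $m$, placing $m \in \ker(i[n])$.

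The main obstacle across all three parts is the $\GL$-equivariance argument that converts annihilation by the $\GL$-ideal $\fm^{[q/p]}$ into annihilation by the specific monomial $y_1^{q/p}$ (or its iterates); this is the technical heart of the proof and mirrors the structure of Proposition~5.1 in \cite{ganglp}, with the shift parameter $q/p$ replacing the classical single-variable shift and requiring careful bookkeeping of polynomial degrees through the embedding into $S(k \oplus \bV)$.
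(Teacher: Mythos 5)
The paper does not supply its own proof here: the lemma is quoted from \cite[Proposition~5.1]{ganglp}, so there is no in-paper argument to compare against; what follows is an assessment of your proposal on its own terms.

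The central claim in your treatment of (1) --- that $\ker(i_M)$ coincides with the full torsion submodule $\mGaq(M)$ --- is false, and this is already visible from part (3), which identifies $\mGaq(M)$ as the \emph{colimit} $\colim_n \ker(i[n])$ rather than as $\ker(i[1]) = \ker(i_M)$ alone. A concrete counterexample: take $M = S/(\fm^{[q/p]})^2$, a legitimate finitely generated $\Sm$-module since $(\fm^{[q/p]})^2 \supseteq \fm^{[q]}$, and entirely torsion by Lemma~\ref{lem:locannihilator}(2), so $\mGaq(M) = M$. But $i_M(1) = y_1^{q/p}$ inside $M(k \oplus \bV) = \Sym(k \oplus \bV)/(\fm^{[q/p]}(k \oplus \bV))^2$, and since $y_1^{q/p}$ has degree $q/p$ while the ideal is generated in degree $2q/p$, this is nonzero; hence $1 \notin \ker(i_M)$ even though $1$ is torsion. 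The flaw in your containment argument $\mGaq(M) \subseteq \ker(i_M)$ is that Lemma~\ref{lem:locannihilator}(2) only gives $(\fm^{[q/p]})^r m = 0$ for \emph{some} $r$, and your $\GL$-equivariance step would yield $y_1^{q/p} m = 0$ only when $r = 1$. This error propagates: your part (2) is deduced from the false identity by naturality, and your part (3) appeals circularly to (1) in its strong (false) form. What actually holds is only the single inclusion $\ker(i_M) \subseteq \mGaq(M)$; the content of (1) and (3) together is that the increasing chain $\ker(i[n])$ exhausts the torsion, with $\ker(i[n])$ roughly capturing elements killed by $(\fm^{[q/p]})^n$, and establishing that requires the Hasse--Schur shift mechanics of \cite[Section~5]{ganglp} rather than a one-step equivariance argument.
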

In fact, $\mShq$ commutes with the derived functors of $\mGaq$. We explain the set up leading up to Proposition~\ref{prop:shcommutesderivedgamma} but the skip its proof since the proof of \cite[Proposition~5.8]{gan22ext} goes through with obvious modifications. 

Let $M$ be an $\Sm$-module, and let $M \to I^\bullet$ and $\mShq(M) \to J^\bullet$ be injective resolution of $M$ and $\mShq(M)$ respectively. The identity map of $\mShq(M)$ induces a map of complexes $\mShq(I^\bullet) \to J^\bullet$ by the lifting property of the injective resolution $J^\bullet$. Applying $\mGaq$ and taking cohomology, we obtain a map $ H^i(\mGaq(\mShq(I^\bullet))) \to H^i(\mGaq(J^\bullet)) = \rR^i\mGaq(\mShq(M))$. We also have natural isomorphisms,
\begin{displaymath}
H^i(\mGaq(\mShq(I^\bullet))) \cong H^i(\mShq(\mGaq(I^\bullet))) \cong \mShq(H^i(\mGaq(I^\bullet))) = \mShq(\rR^i\mGaq(M)),
\end{displaymath}
where for the first isomorphism, we use that $\mShq$ commutes with $\mGaq$, and for the second isomorphism, we use that $\mShq$ commutes with taking cohomology as it is is an exact functor.
Putting all this together, we get a natural map $F_{i,M} \colon \mShq(\rR^i\mGaq(M)) \to \rR^i\mGaq(\mShq(M))$.
\begin{proposition}\label{prop:shcommutesderivedgamma}
The map $F_i$ defined above is an isomorphism for all $i \ge 0$.
\end{proposition}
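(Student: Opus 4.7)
The plan is to run the standard dimension-shifting / universal $\delta$-functor argument, mirroring \cite[Proposition~5.8]{gan22ext}. Since $\mShq$ is exact (it is essentially a ``take the coefficient of $y_1^{q/p}$'' construction after base change to $k \oplus \bV$), both sequences of functors $\{\mShq \circ \rR^i\mGaq(-)\}_{i\geq 0}$ and $\{\rR^i\mGaq(\mShq(-))\}_{i\geq 0}$ are cohomological $\delta$-functors on $\Mod_{\Sm}$, and the maps $F_{i,M}$ constructed in the excerpt are, by naturality of the injective resolution lifts and of the edge isomorphisms, a morphism of $\delta$-functors. The base case $i=0$ is exactly part (2) of the preceding lemma, which gives $\mShq \circ \mGaq \cong \mGaq \circ \mShq$.

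For the inductive step, embed $M$ into an injective object $E$ of $\Mod_{\Sm}$, producing a short exact sequence $0 \to M \to E \to Q \to 0$. Exactness of $\mShq$ gives a parallel short exact sequence $0 \to \mShq(M) \to \mShq(E) \to \mShq(Q) \to 0$. The associated long exact sequences under $\rR^{\bullet}\mGaq$ assemble into a commutative ladder, with the vertical maps being the $F_{i,-}$. The universal $\delta$-functor argument then reduces everything to the effaceability statement that $\rR^{i}\mGaq(\mShq(E)) = 0$ for all $i > 0$ and every injective $E$; indeed, $\rR^{i}\mGaq(E) = 0$ for $i > 0$ is automatic, so the source of $F_{i,E}$ vanishes, and acyclicity of $\mShq(E)$ forces the target to vanish as well. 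A standard diagram chase then propagates the isomorphism $F_{i-1,Q}$ obtained by induction to $F_{i,M}$.

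The main obstacle, therefore, is the acyclicity claim $\rR^{i}\mGaq(\mShq(E)) = 0$ for $i>0$ when $E$ is injective in $\Mod_{\Sm}$. I would split into the torsion and torsion-free cases using Proposition~\ref{prop:masterprop}. For torsion-free injective $E$, Proposition~\ref{prop:masterprop}(1) (applied in the $\Sm$ setting, as in part (5)) tells us $E$ is, up to direct summands in a suitable sense, of the form $\Sm \otimes I$ for an injective $\GL$-representation $I$; then $\mShq(E)$ remains torsion-free (since $\mShq$ preserves the full subcategory $\ker(\mGaq)$ by the lemma), hence $\mGaq(\mShq(E))=0$ and one concludes by the same inductive argument as used in the proof of Proposition~\ref{prop:semilcvanishing}. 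For torsion injective $E$, Proposition~\ref{prop:masterprop}(2) ensures $\mShq$ preserves the torsion subcategory, so $\mShq(E)$ is torsion; the higher vanishing then comes from showing $\mShq(E)$ is again injective in $\Mod_{\Sm}^{\tors}$, which I would deduce from the formula $\mGaq \cong \colim_n \ker(i[n])$ combined with the fact that $\mShq$ commutes with filtered colimits and kernels, so that $\Hom$-exactness tests transfer across $\mShq$.

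Once the acyclicity of $\mShq(E)$ is established, no further obstacle remains: the ladder of long exact sequences degenerates to isomorphisms $\rR^{i}\mGaq(Q) \cong \rR^{i+1}\mGaq(M)$ (and likewise after applying $\mShq$), and the five lemma applied to the comparison square identifies $F_{i+1,M}$ with $F_{i,Q}$, completing the induction.
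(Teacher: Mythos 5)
Your overall strategy --- treating both $\{\mShq \circ \rR^i\mGaq\}_i$ and $\{\rR^i\mGaq \circ \mShq\}_i$ as $\delta$-functors and running a dimension-shifting induction --- is sound in outline, and as you note, it correctly reduces everything to the acyclicity claim $\rR^i\mGaq(\mShq(E)) = 0$ for $i > 0$ and $E$ injective in $\Mod_{\Sm}$. But your justification of that acyclicity has a real gap. In the torsion-free case, you show only that $\mGaq(\mShq(E)) = 0$, i.e.\ that $\mShq(E)$ is torsion-free. Torsion-freeness gives vanishing at $i = 0$ and nothing more; to get $\rR^i\mGaq(\mShq(E)) = 0$ for $i > 0$ you need $\mShq(E)$ to be semi-induced, flat, or injective, none of which you establish. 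The appeal to ``the same inductive argument as in the proof of Proposition~\ref{prop:semilcvanishing}'' does not repair this: that argument is for \emph{finitely generated} modules (injectives here are not), it proves $\rR\Gamma = 0 \Rightarrow$ semi-induced rather than the implication you need here, and it relies on the shift theorem via Proposition~\ref{prop:masterprop}(6), which the paper's own remark stresses must \emph{not} enter the proof of this proposition. Separately, in the torsion case your citation of Proposition~\ref{prop:masterprop}(2) is off --- that statement is about injective envelopes of torsion modules being torsion, not about $\mShq$ preserving torsion (the latter does hold, but because $\mGaq$ commutes with $\mShq$: if $\mGaq E = E$ then $\mGaq\mShq E = \mShq\mGaq E = \mShq E$) --- and the phrase ``$\Hom$-exactness tests transfer across $\mShq$'' is not a proof. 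That said, the torsion case can be closed more directly than you suggest: once $\mShq(E)$ is torsion, Property~(Inj) for $\Mod_{\Sm}^{\tors}\subset\Mod_{\Sm}$ gives $\rR^i\mGaq$-acyclicity of \emph{any} torsion module for $i>0$, with no further work.

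The clean route, and what the cited \cite[Proposition~5.8]{gan22ext} is doing, is to observe that $\mShq$ preserves injectives (because the shift functor has an exact left adjoint). With that fact in hand there is no need for the $\delta$-functor machinery at all: the lift $\mShq(I^{\bullet}) \to J^{\bullet}$ constructed in the setup is then a quasi-isomorphism between complexes of injectives, hence a chain homotopy equivalence, hence induces isomorphisms after applying $\mGaq$; combined with $\mGaq\circ\mShq \cong \mShq\circ\mGaq$ and exactness of $\mShq$ one reads off $F_i$ as an isomorphism directly. This one fact --- preservation of injectives by $\mShq$ --- is exactly the ingredient your proposal is implicitly circling around but never pins down, and supplying it would also make your dimension-shifting scaffolding unnecessary.
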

\begin{proof}
The proof of \cite[Proposition~5.8]{gan22ext} applies mutatis mutandis.
\end{proof}

We can use the above result to obtain another characterization of flat $\Sm$-modules.
\begin{corollary}\label{cor:lcvanishingSm}
    Assume $M$ is a finitely generated $\Sm$-module. Then $M$ is semi-induced if and only if $\rR\mGaq(M) = 0$.
\end{corollary}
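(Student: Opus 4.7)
The statement is the $\Sm$-analog of Proposition~\ref{prop:semilcvanishing}, and the plan is to port that two-step argument over, using Proposition~\ref{prop:masterprop}(5) in place of Proposition~\ref{prop:masterprop}(6) and Proposition~\ref{prop:shcommutesderivedgamma} as the main new technical tool.

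For the only-if direction, since semi-induced modules admit a finite filtration by induced modules $\Sm \otimes L_\lambda$, dévissage along the long exact sequence of $\rR\mGaq$ reduces the claim to showing $\rR\mGaq(\Sm \otimes L_\lambda) = 0$ for every partition $\lambda$. My plan is to pick an injective resolution $0 \to L_\lambda \to J^\bullet$ in $\Rep^{\pol}(\GL)$ and tensor with $\Sm$, obtaining a resolution $0 \to \Sm \otimes L_\lambda \to \Sm \otimes J^\bullet$ of $\Sm$-modules. Each term $\Sm \otimes J^i$ is manifestly torsion-free, so applying $\mGaq$ yields the zero complex; the task thus reduces to verifying that each $\Sm \otimes J^i$ is $\mGaq$-acyclic in $\Mod_{\Sm}$.

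For the if direction, I would induct on $t_0(M)$, exactly mimicking the induction in Proposition~\ref{prop:semilcvanishing}. The hypothesis $\rR\mGaq(M) = 0$ gives $\mGaq(M) = 0$, so $M$ is torsion-free, and Proposition~\ref{prop:masterprop}(5) embeds $M$ into a finitely generated flat (hence semi-induced) $\Sm$-module $F$ with $t_0(F/M) < t_0(M)$. The only-if direction applied to $F$ gives $\rR\mGaq(F) = 0$, and the long exact sequence of $\rR\mGaq$ then forces $\rR\mGaq(F/M) = 0$. By the inductive hypothesis $F/M$ is semi-induced, so $M$ is semi-induced as the kernel of a surjection between semi-induced modules.

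The main obstacle is the $\mGaq$-acyclicity of $\Sm \otimes J^i$, as $\Mod_{\Sm}$ is not known (in what we have been given) to admit a clean analog of Proposition~\ref{prop:masterprop}(1) realizing torsion-free injectives as tensors $\Sm \otimes I$. My plan to circumvent this is to exploit the natural isomorphism $\mShq(\rR^j\mGaq(N)) \cong \rR^j\mGaq(\mShq(N))$ from Proposition~\ref{prop:shcommutesderivedgamma}: iterating it, $\rR^j\mGaq(\Sm \otimes J^i)$ embeds into $\rR^j\mGaq(\mShq^n(\Sm \otimes J^i))$, and by the shift theorem for $\Sm$-modules used in the proof of Proposition~\ref{prop:masterprop}(5), the shifts $\mShq^n(\Sm \otimes J^i)$ become sufficiently structured (in particular torsion-free with vanishing $\mGaq$) that higher $\mGaq$ vanishes on them. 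If this shift-theoretic route proves too indirect, the backup plan is to establish an $\Sm$-version of Proposition~\ref{prop:masterprop}(1) directly, showing that $\Sm \otimes I$ with $I \in \Rep^{\pol}(\GL)$ injective is itself injective in $\Mod_{\Sm}$, which would upgrade the tensored resolution to an honest injective resolution and conclude immediately.
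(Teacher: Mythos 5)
The if direction of your proposal is correct and matches the paper exactly: it is the same induction on $t_0(M)$ using Proposition~\ref{prop:masterprop}(5), and the paper's proof explicitly points to the if direction of Proposition~\ref{prop:semilcvanishing} as the template.

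The only-if direction is where your proposal has a genuine gap. You correctly identify the obstacle—you need to know that the torsion-free modules $\Sm \otimes J^i$ are $\mGaq$-acyclic, and the paper has not stated an $\Sm$-analogue of Proposition~\ref{prop:masterprop}(1)—but neither of your two routes around it actually closes it. For the shift-theoretic route, the claim that $\rR^j\mGaq(\Sm \otimes J^i)$ \emph{embeds} into $\rR^j\mGaq(\mShq^n(\Sm \otimes J^i))$ does not follow from Proposition~\ref{prop:shcommutesderivedgamma}. That proposition gives a natural \emph{isomorphism} $\mShq(\rR^j\mGaq(N)) \cong \rR^j\mGaq(\mShq(N))$, which is a commutation statement, not an inclusion. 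The inclusion you would actually need is the one induced by the natural transformation $i \colon \id \to \mShq$, namely $\rR^j\mGaq(N) \to \rR^j\mGaq(\mShq(N))$; but $i_M$ is injective precisely when $M$ is torsion-free, and $\rR^j\mGaq(N)$ is always torsion, so injectivity of $i$ on these local cohomology modules is exactly what you cannot assume. (If $\rR^j\mGaq(N)$ is a nonzero torsion module, its map into its own Hasse--Schur shift can, and generally will, have kernel.) Separately, even if the map were injective, the shift theorem in \cite[Theorem~E]{ganglp} is a finiteness statement about finitely generated modules, while $J^i$ is an arbitrary injective polynomial $\GL$-representation and need not be finitely generated, so "becomes sufficiently structured after enough shifts" has no footing. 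Your backup plan—proving that $\Sm \otimes I$ is injective in $\Mod_{\Sm}$ for $I$ injective in $\Rep^{\pol}(\GL)$—is the kind of statement that would make the argument go through, but it is asserted, not proved, and it is not among the structural facts the paper provides for $\Sm$-modules. The paper itself handles this direction by citing \cite[Proposition~5.11]{gan22ext}, so the content you are missing is precisely the external input from that reference; your proposal does not supply a substitute.
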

\begin{proof}
    The proof of the only if direction is similar to \cite[Proposition~5.11]{gan22ext}; the converse is similar to the proof of the if direction in Proposition~\ref{prop:semilcvanishing}.
\end{proof}

\begin{proposition}[Finiteness of local cohomology for $\Sm$-modules]\label{prop:lcfinitenessSm}
   Let $M$ denote a finitely generated $\Sm$-module. The $\Sm$-module $\rR^i\mGaq(M)$ is finitely generated for all $i$ and vanishes for sufficiently large $i$. 
\end{proposition}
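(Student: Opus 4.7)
The plan is to replay the proof of Theorem~\ref{thm:lcfinitenessS} with each $S$-input replaced by its $\Sm$-counterpart. All the required ingredients are already in place: Proposition~\ref{prop:masterprop}(2) gives property (Inj) for $\Mod_{\Sm}^{\tors} \subset \Mod_{\Sm}$, Corollary~\ref{cor:lcvanishingSm} plays the role of Proposition~\ref{prop:semilcvanishing}, and Proposition~\ref{prop:masterprop}(5) plays the role of Proposition~\ref{prop:masterprop}(6). What I would need to supply is the analogue of \cite[Theorem~B]{ganglp} for $\Sm$-modules, namely that $\rD^b_{\fgen}(\Mod_{\Sm})$ is generated as a triangulated category by the classes of finitely generated torsion and finitely generated semi-induced $\Sm$-modules.

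With that in hand, the proof proceeds in three short steps. First, for a finitely generated torsion $T$, property (Inj) yields $\rR\mGaq(T) = \mGaq(T) = T$ concentrated in cohomological degree $0$. Second, for a finitely generated semi-induced $F$, Corollary~\ref{cor:lcvanishingSm} yields $\rR\mGaq(F) = 0$. Third, a standard d\'evissage along distinguished triangles built from short exact sequences of generators, together with the long exact sequence of $\rR^i\mGaq$, transports these two finiteness properties to every object of $\rD^b_{\fgen}(\Mod_{\Sm})$.

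To establish the generation claim, I would argue by induction on $t_0$. For a finitely generated $\Sm$-module $M$, the short exact sequence $0 \to \mGaq(M) \to M \to M/\mGaq(M) \to 0$ reduces the problem to the torsion-free case. For torsion-free $M$, Proposition~\ref{prop:masterprop}(5) produces a finitely generated flat $\Sm$-module $F$ containing $M$ with $t_0(F/M) < t_0(M)$; the quotient $F/M$ may fail to be torsion-free, so I would split off its torsion part and apply the inductive hypothesis to the resulting torsion-free piece of strictly smaller $t_0$. The base case is automatic: once $t_0(M)$ attains its minimal value, the inequality $t_0(F/M) < t_0(M)$ forces $F/M = 0$, so $M = F$ is itself flat.

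The main obstacle is less a deep point and more a bookkeeping one: verifying that all the modules produced during the d\'evissage remain finitely generated (so that we genuinely stay inside $\rD^b_{\fgen}$) and that the induction on $t_0$ closes cleanly at the base case. In particular, one needs noetherianity of finitely generated $\Sm$-modules so that $\mGaq(M)$ is finitely generated whenever $M$ is; this is implicit in the framework of \cite{ganglp} and should require no new input.
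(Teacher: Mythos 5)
Your proposal is correct and follows essentially the same route the paper takes, transposing the three-step argument of Theorem~\ref{thm:lcfinitenessS} (property (Inj) on torsion objects, vanishing on semi-induced objects, and a generation statement for $\rD^b_{\fgen}(\Mod_{\Sm})$) to the $\Sm$-setting. The generation lemma you supply via induction on $t_0$ using Proposition~\ref{prop:masterprop}(5) is precisely the content the paper delegates to \cite[Proposition~6.8]{ganglp} (as flagged in the remark following the statement), so no new ideas are needed beyond what you have written.
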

\begin{proof}
This is similar to the proof of Theorem~\ref{thm:lcfinitenessS} (see also  \cite[Theorem~5.17]{gan22ext}).
\end{proof}
\begin{remark}
We emphasize that Proposition~\ref{prop:shcommutesderivedgamma} is independent of the shift theorem for $\Sm$ (see \cite[Remark~5.9]{gan22ext}). However, Proposition~\ref{prop:lcfinitenessSm} uses \cite[Proposition~6.8]{ganglp} which is a corollary of the shift theorem for $\Sm$.
\end{remark}

\begin{corollary}\label{cor:coarsesemiSm}
  Fix a prime power $q$, we have a semi-orthogonal decomposition
     \begin{displaymath}
         \rD^b_{\fgen}(\Mod_{\Sm}) = \langle  
         \rD^b_{\fgen}(\Mod_{\Sm}^{\tors}),
         \rD^b_{\fgen}(\Mod_{\Sm}^{\gen}) 
         \rangle,
     \end{displaymath} 
    where we identify $\rD^b_{\fgen}(\Mod_{\Sm}^{\gen})$ as a subcategory of $\rD^b_{\fgen}(\Mod_{\Sm})$ using the (derived) right adjoint to the Serre quotient functor $\Mod_{\Sm} \to \Mod_{\Sm}^{\gen}$.
\end{corollary}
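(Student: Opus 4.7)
The plan is to deduce Corollary~\ref{cor:coarsesemiSm} by running the same abstract argument that established its counterpart Proposition~\ref{prop:coarsesemiS}, namely the framework of \cite[Proposition~4.15]{ss19gl2}. That framework takes as input a Serre subcategory $\cB \subset \cA$ of a Grothendieck abelian category equipped with a distinguished notion of ``finitely generated'' object, and outputs a semi-orthogonal decomposition $\rD^b_{\fgen}(\cA) = \langle \rD^b_{\fgen}(\cB), \rD^b_{\fgen}(\cA/\cB)\rangle$ provided that (i) $\cB$ satisfies property (Inj) in $\cA$, and (ii) both the right adjoint $\Gamma_{\cB}$ of the inclusion $\cB \hookrightarrow \cA$ and the right adjoint $S$ of the Serre quotient $\cA \to \cA/\cB$ are derived functors that preserve $\rD^b_{\fgen}$. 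I will apply this with $\cA = \Mod_{\Sm}$ and $\cB = \Mod_{\Sm}^{\tors}$.

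First, I would verify hypothesis (i): this is precisely Proposition~\ref{prop:masterprop}(2), which asserts that the injective envelope of a torsion $\Sm$-module is torsion, hence injective objects of $\Mod_{\Sm}^{\tors}$ remain injective in $\Mod_{\Sm}$. Second, I would verify that $\rR\mGaq$ preserves $\rD^b_{\fgen}(\Mod_{\Sm})$; this is exactly the content of Proposition~\ref{prop:lcfinitenessSm}. Third, I would deduce the analogous preservation statement for the derived section functor $\rR S$ from the distinguished triangle
\[ \rR\mGaq(M) \longrightarrow M \longrightarrow \rR S(T(M)) \xrightarrow{\ +1\ } \]
which holds in general for a localizing subcategory $\cB \subset \cA$ satisfying property (Inj), where $T \colon \cA \to \cA/\cB$ is the quotient. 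Since the first two terms lie in $\rD^b_{\fgen}(\Mod_{\Sm})$ whenever $M$ does (using Proposition~\ref{prop:lcfinitenessSm} for $\rR\mGaq(M)$), so does the third.

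Once these three ingredients are in place, the conclusion of \cite[Proposition~4.15]{ss19gl2} applied to the pair $(\Mod_{\Sm}, \Mod_{\Sm}^{\tors})$ yields exactly the claimed semi-orthogonal decomposition, with $\rD^b_{\fgen}(\Mod_{\Sm}^{\gen})$ embedded in $\rD^b_{\fgen}(\Mod_{\Sm})$ via $\rR S$ as required by the statement.

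There is no substantive obstacle here: the nontrivial content has already been absorbed into the two structural facts we invoke, namely the property (Inj) coming from Proposition~\ref{prop:masterprop}(2) and the finite generation of local cohomology from Proposition~\ref{prop:lcfinitenessSm} (which itself relied on the shift theorem for $\Sm$-modules through \cite[Proposition~6.8]{ganglp}). The only mild bookkeeping is to check that the abstract result of \cite[Proposition~4.15]{ss19gl2} applies in our Grothendieck-abelian setting with ``finitely generated'' interpreted in the $\GL$-equivariant sense, but this is immediate since the category $\Mod_{\Sm}$ is locally noetherian in the appropriate sense and the hypotheses of loc.~cit.~are formulated purely in terms of property (Inj) plus preservation of $\rD^b_{\fgen}$ under the two right adjoints.
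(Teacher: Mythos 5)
Your proposal is correct and takes essentially the same route as the paper: the paper's proof of Corollary~\ref{cor:coarsesemiSm} simply invokes \cite[Proposition~4.15]{ss19gl2}, whose hypotheses are supplied by Proposition~\ref{prop:masterprop}(2) for property (Inj) and by the immediately preceding Proposition~\ref{prop:lcfinitenessSm} for boundedness of $\rR\mGaq$, exactly as you spell out. You have merely made explicit the bookkeeping (including the localization triangle argument for $\rR S$) that the paper leaves to the cited reference.
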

\begin{proof}
See \cite[Proposition~4.15]{ss19gl2} for details.
\end{proof}

\subsection{Proof of Theorem~\ref{thm:semiorth}}\label{ss:proofA}
We now have all the tools required to prove Theorem~\ref{thm:semiorth}.
\begin{proposition}\label{prop:semiorthtors}
    For each prime power $q$, let $\cT_q \subset \rD^b_{\fgen}(\Mod_S)$ be the triangulated subcategory generated by the modules $\Sm \otimes L_{\lambda}$ with $\lambda$ allowed to vary. We have an infinite semi-orthogonal decomposition
    \[\rD^b_{\fgen}(\Mod_S^{\tors}) = \langle \cT_1, \cT_p, \ldots, \cT_{q}, \ldots \rangle
    \]
\end{proposition}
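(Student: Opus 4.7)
The plan is to induct on prime powers $q$, proving at each stage that
\[
\rD^b_{\fgen}(\Mod_S[\fm^{[q]\infty}]) = \langle \cT_1, \cT_p, \ldots, \cT_q \rangle,
\]
and then take a union as $q \to \infty$ using Lemma~\ref{lem:locannihilator}(1), since every finitely generated torsion $S$-module is annihilated by some $\fm^{[q]}$. For the base case $q=1$, a finitely generated module locally annihilated by a power of $\fm$ is in fact killed by $\fm^k$ for some $k$, so its $\fm$-adic filtration has successive quotients that are finite-dimensional objects of $\Mod_S[\fm] = \Rep^{\pol}(\GL)$; a d\'evissage then places the module in $\cT_1$, which is generated by $L_\lambda = (S/\fm) \otimes L_\lambda$.

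For the inductive step with $q > 1$, I would apply Proposition~\ref{prop:restrofsection} and Corollary~\ref{cor:devissagelcfiniteness} with $A = S$, $\fa = \fm^{[q]}$, and $\fb = \fm^{[q/p]}$, so that $\Mod_A[\fa] = \Mod_{\Sm}$, $\cC = \Mod_{\Sm}^{\gen}$, and $\cD = \Mod_S[\fm^{[q]\infty}]/\Mod_S[\fm^{[q/p]\infty}]$. Property (Inj) for $\Mod_{\Sm}^{\tors} \subset \Mod_{\Sm}$ is Proposition~\ref{prop:masterprop}(2); for $\Mod_S[\fm^{[q/p]\infty}] \subset \Mod_S[\fm^{[q]\infty}]$, it is inherited from the example preceding Proposition~\ref{prop:restrofsection}, since any injective of $\Mod_S[\fm^{[q/p]\infty}]$ is already injective in the ambient $\Mod_S$ and hence in any intermediate Serre subcategory. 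Corollary~\ref{cor:coarsesemiSm} ensures that $\rR \overline{S}$ preserves the bounded derived category, so Corollary~\ref{cor:devissagelcfiniteness} yields the same for $\rR S$, producing (via the usual recollement formalism, cf.~\cite[Proposition~4.15]{ss19gl2}) the decomposition
\[
\rD^b_{\fgen}(\Mod_S[\fm^{[q]\infty}]) = \langle \rD^b_{\fgen}(\Mod_S[\fm^{[q/p]\infty}]),\ \rR S(\rD^b_{\fgen}(\cD)) \rangle.
\]

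The main technical point is then identifying the second factor with $\cT_q$. For $\cT_q \subset \rR S(\rD^b_{\fgen}(\cD))$, I use that $\Sm \otimes L_\lambda$ is flat and hence torsion-free in $\Mod_{\Sm}$ (Lemma~\ref{lem:flat}), so $\rR\overline{S}\,\overline{T}(\Sm \otimes L_\lambda) \cong \Sm \otimes L_\lambda$; Proposition~\ref{prop:restrofsection}(2) identifies this with $\rR S(\overline{\iota}\,\overline{T}(\Sm \otimes L_\lambda))$, placing the generators of $\cT_q$ inside $\rR S(\rD^b_{\fgen}(\cD))$. For the reverse inclusion, the d\'evissage in the proof of Corollary~\ref{cor:devissagelcfiniteness} shows every object of $\cD$ admits a finite filtration by objects of $\cC$, whence $\rR S(\rD^b_{\fgen}(\cD))$ is generated by $\iota \circ \rR\overline{S}(\rD^b_{\fgen}(\cC))$; Corollary~\ref{cor:coarsesemiSm}, Proposition~\ref{prop:masterprop}(5), and Lemma~\ref{lem:flat} together show this is in turn generated by the semi-induced modules $\Sm \otimes L_\lambda$. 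Finally, the orthogonality $\Hom(x,y) = 0$ for $x \in \cT_i$ and $y \in \cT_j$ with $i < j$ is automatic: $\cT_i \subset \rD^b_{\fgen}(\Mod_S[\fm^{[j/p]\infty}])$, which is orthogonal (in the paper's convention) to $\cT_j$ by the SOD at level $j$.
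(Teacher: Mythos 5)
Your proposal is correct and takes essentially the same approach as the paper: induct on prime powers $q$, use Corollary~\ref{cor:devissagelcfiniteness} (with $\fa = \fm^{[q]}$, $\fb = \fm^{[q/p]}$, $\cC = \Mod_{\Sm}^{\gen}$) together with Proposition~\ref{prop:lcfinitenessSm}/Corollary~\ref{cor:coarsesemiSm} to spread the SOD from $\Mod_{\Sm}$ to $\Mod_S[(\fm^{[q]})^{\infty}]$, and take the colimit over $r$ to reach $\Mod_S^{\tors}$. You spell out in more detail the base case, the Property (Inj) checks, the identification of the second factor with $\cT_q$, and the orthogonality --- all of which the paper handles implicitly or by citing \cite[Section~4.2]{ss19gl2}. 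One cosmetic slip: the successive quotients in the base case are finite \emph{length} objects of $\Rep^{\pol}(\GL)$ (finite filtration by $L_\lambda$), not finite-dimensional --- irreducible $\GL_\infty$-representations are infinite-dimensional.
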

\begin{proof}
Let $\overline{\cS_q}$ be the right adjoint to $T_q \colon \Mod_{\Sm} \to \Mod_{\Sm}^{\gen}$ and let 
\[\cS_q \colon \Mod_{S}[(\fm^{[q]})^\infty]/\Mod_{S}[(\fm^{[q/p]})^\infty] \to \Mod_{S}[(\fm^{[q]})^\infty]\]
be the section functor. The functor $\rR \overline{\cS_q}$ preserves the bounded derived category by Proposition~\ref{prop:lcfinitenessSm} so we can spread it out to $\rR \cS_q$ by Corollary~\ref{cor:devissagelcfiniteness}. Using the same corollary, the category $\cT_{q}$ can be identified as objects for which $\rR\Gamma_{q/p}$ vanishes inside $\rD^b_{\fgen}(\Mod_S[(\fm^{[q]})^{\infty})$.

Using the results of \cite[Section~4.2]{ss19gl2} and induction on $r$, we get a semiorthogonal decomposition  
    \[\rD^b_{\fgen}(\Mod_S[(\fm^{[p^r]})^{\infty}]) = \langle \cT_1, \cT_p, \ldots, \cT_{p^r} \rangle. \] 
The proposition follows as $\rD^b_{\fgen}(\Mod_S^{\tors}) = \colim_r \rD^b_{\fgen}(\Mod_S[(\fm^{[p^r]})^{\infty}])$.
\end{proof}

\begin{proof}[Proof of Theorem~\ref{thm:semiorth}]
   This is clear by combining Proposition~\ref{prop:coarsesemiS} with Proposition~\ref{prop:semiorthtors}. 
\end{proof}

\section{Proofs of Theorem~\ref{thm:lnnrVecS} and Theorem~\ref{thm:bettiM}}\label{s:proofBC}
\subsection{Le--Nagel--Nguyen--R{\"o}mer Conjectures}\label{ss:lnnr} 
We first relate the local cohomology functors $\rR^i\Gamma$ on $S$-modules to the familiar one from commutative algebra.

Let $\cA$ be the category of sequences $\{M[n]\}$ where $M[n]$ is a $\bZ$-graded module over $k[x_1, x_2, \ldots, x_n]$ for all $n\geq 0$, and let $\cA_{\lf}$ be the Serre subcategory of objects supported in finitely many degrees, i.e., $M$ is in $\cA_{\lf}$ if and only if $M[n] = 0$ for $n \gg 0$. Under our definition, $\cA_{\lf}$ is not a Grothendieck abelian category as it is {not} closed under arbitrary colimits. We have a canonical functor $\Phi \colon \Mod_S \to \cA$ which assigns the sequence $\{M^{\GL_{\infty-n}}\}_n$ to an $S$-module $M$.

We let $\Mod_S^{\gf} \subset \Mod_S$ be the full subcategory of \textit{generically finite length} $S$-modules, i.e., $S$-modules whose image in $\Mod_S^{\gen}$ has finite length; this abelian category includes all torsion $S$-modules (not just finitely generated ones) and has enough injectives by Proposition~\ref{prop:masterprop}(1)\&(2).

Let $H^{0}_{\fm} \colon \Mod_S^{\gf} \to \cA$ be the functor which assigns the sequence 
$\{H^0_{\fm(k^n)}(M(k^n))\}$ to the $S$-module $M$. We similarly define $H^i_{\fm}$ using the usual local cohomology modules $H^i_{(x_1, x_2,\ldots, x_n)}$. Finally, let $\widetilde{H^i_{\fm}} \colon \Mod_S^{\gf} \to \cA/\cA_{\lf}$ be the composition $T \circ H^i_{\fm}$ for all $i \geq 0$. 

\begin{lemma}
    The $i$-th right derived functor of $\widetilde{H^0_{\fm}}$ is naturally isomorphic to $\widetilde{H^i_{\fm}}$. 
\end{lemma}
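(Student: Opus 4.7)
The plan is to exhibit $\{\widetilde{H^i_{\fm}}\}_{i \geq 0}$ as a universal cohomological $\delta$-functor $\Mod_S^{\gf} \to \cA/\cA_{\lf}$ extending $\widetilde{H^0_{\fm}}$. Since $\{\rR^i \widetilde{H^0_{\fm}}\}_i$ is characterized by the same universal property, the uniqueness portion of Grothendieck's theorem then yields the claimed natural isomorphism. There are two verifications: a $\delta$-functor property and effaceability of $\widetilde{H^i_{\fm}}$ for $i > 0$.

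First I would check the $\delta$-functor property. Evaluation at $k^n$ is exact since it is restriction of a strict polynomial functor along $k^n \hookrightarrow \bV$, so any short exact sequence in $\Mod_S^{\gf}$ produces short exact sequences of $k[x_1,\ldots,x_n]$-modules at every level $n$. The classical long exact sequences of local cohomology then assemble componentwise into a long exact sequence in $\cA$, which remains exact after applying the exact Serre quotient $T\colon \cA \to \cA/\cA_{\lf}$.

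Next, I would check effaceability by exhibiting, for each $M \in \Mod_S^{\gf}$, an embedding $M \hookrightarrow I$ with $\widetilde{H^i_{\fm}}(I) = 0$ for every $i > 0$. Using the structural results in Proposition~\ref{prop:masterprop}, every such $M$ embeds into a direct sum $I = I^{\tors} \oplus (S \otimes J)$ of an injective torsion $S$-module and a module of the form $S \otimes J$ with $J$ an injective polynomial $\GL$-representation, and the generic-finite-length condition ensures this stays inside $\Mod_S^{\gf}$. On the torsion summand, Lemma~\ref{lem:locannihilator}(1) implies $I^{\tors}(k^n)$ is annihilated by a power of $\fm(k^n)$ for each $n$, so $H^i_{\fm(k^n)}(I^{\tors}(k^n)) = 0$ for all $i > 0$. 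On the other summand, $(S \otimes J)(k^n) = k[x_1,\ldots,x_n] \otimes_k J(k^n)$ is a free $k[x_1,\ldots,x_n]$-module, and classical commutative algebra forces its local cohomology to be concentrated in cohomological degree $n$. Therefore, for fixed $i > 0$, the sequence $\{H^i_{\fm(k^n)}((S \otimes J)(k^n))\}_n$ is supported only at the single index $n = i$, hence lies in $\cA_{\lf}$ and vanishes in $\cA/\cA_{\lf}$. Summing the two contributions gives $\widetilde{H^i_{\fm}}(I) = 0$ for $i > 0$.

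The main technical point is the existence of the split embedding $M \hookrightarrow I^{\tors} \oplus (S \otimes J)$ inside $\Mod_S^{\gf}$, which is where the generic-finite-length hypothesis does its work by keeping the torsion-free injective summand finite enough to stay in the subcategory; this is essentially the content of the paper's enough-injectives claim for $\Mod_S^{\gf}$ combined with Proposition~\ref{prop:masterprop}. Once this is in hand, the universal $\delta$-functor comparison is mechanical and the lemma follows.
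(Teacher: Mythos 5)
Your proof follows essentially the same route as the paper's: exhibit $\{\widetilde{H^i_{\fm}}\}$ as a $\delta$-functor, then prove effaceability by showing vanishing on injective objects, splitting into the torsion case (where the evaluation $I(k^n)$ is $\fm(k^n)$-torsion, killing higher local cohomology) and the torsion-free case (where $I(k^n)$ is free, so local cohomology concentrates in cohomological degree $n$ and dies in $\cA/\cA_{\lf}$). One small imprecision: you invoke Lemma~\ref{lem:locannihilator}(1) to conclude $I^{\tors}(k^n)$ is annihilated by a power of $\fm(k^n)$, but that lemma is stated for finitely generated modules, and the injective $I^{\tors}$ need not be finitely generated; the correct (and sufficient) statement, which is what the paper uses, is that $I^{\tors}(k^n)$ is locally $\fm(k^n)$-torsion (each element is killed by a power), which already forces $H^i_{\fm(k^n)}(I^{\tors}(k^n)) = 0$ for $i>0$.
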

\begin{proof}
The sequence $\{\widetilde{H^i_{\fm}}\}$ can be endowed with the structure of a $\delta$-functor. It therefore suffices to show that it is effaceable which we do by proving that for all $i > 0$, the functor $\widetilde{H^i_{\fm}}$ vanishes on the indecomposable injectives of $\Mod_S$. By Property (Inj), either $I$ is a torsion $S$-module or torsion-free.
In the first case, the $S(k^n)$-module $I(k^n)$ is itself torsion so $H^i_{(x_1, x_2, \ldots, x_n)}(I) = 0$ for all $i > 0$. In the case when $I$ is torsion-free instead, we have $I \cong S \otimes I_{\lambda}$ for some partition $\lambda$ by Proposition~\ref{prop:masterprop}(1), and so $H^i_{(x_1, x_2, \ldots, x_n)}(I(k^n))$ is nonzero if and only if $i = n$ by standard facts about local cohomology for free modules, and in particular, $H^i_{\fm}(I)$ supported only in degree $i$ and so $\widetilde{H_{\fm}^i}(I) = 0$ as required.
\end{proof}

\begin{proposition}\label{prop:loccoh}
    The functor $T \circ \Phi \circ \rR^i \Gamma \colon \Mod_S^{\gf} \to \cA/\cA_{\lf}$ is naturally isomorphic to $\widetilde{H^i_{\fm}}$.
\end{proposition}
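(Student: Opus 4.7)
The plan is to realize both $T \circ \Phi \circ \rR^\bullet \Gamma$ and $\widetilde{H^\bullet_\fm}$ as universal $\delta$-functors on $\Mod_S^{\gf}$ that agree in degree zero. Universality of $\widetilde{H^\bullet_\fm}$ was just established in the previous lemma, so only two tasks remain: prove universality for $T \circ \Phi \circ \rR^\bullet \Gamma$, and identify it with $\widetilde{H^0_\fm}$ at $i = 0$.

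For the $\delta$-functor structure I would first observe that $\Phi$ is exact: under the equivalence $\Rep^{\pol}(\GL) \simeq \Pol$, the functor $V \mapsto V^{\GL_{\infty - n}}$ corresponds to evaluation of a strict polynomial functor at $k^n$, which is exact by definition. Consequently $T \circ \Phi \circ \rR^\bullet \Gamma$ inherits the $\delta$-functor structure from $\rR^\bullet \Gamma$. For universality (i.e., effaceability) I would invoke Proposition~\ref{prop:masterprop}(1)--(3): every $M \in \Mod_S^{\gf}$ embeds into an injective in $\Mod_S$ whose indecomposable summands are either torsion injectives (Proposition~\ref{prop:masterprop}(3)) or modules of the form $S \otimes I_\mu$ (Proposition~\ref{prop:masterprop}(1)), and in either case the summand is $\Gamma$-acyclic, so $\rR^i \Gamma$ is effaced by such embeddings for $i > 0$.

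For the degree-zero comparison I would construct a natural inclusion $\Phi \circ \Gamma(M) \hookrightarrow H^0_\fm(M)$ in $\cA$: any $m \in \Gamma(M)^{\GL_{\infty - n}}$ generates a finitely generated torsion $S$-submodule, which (by Lemma~\ref{lem:locannihilator}) is killed by some $\fm^{[q]}$, hence by $(x_1, \dots, x_n)^{nq}$ after evaluation. To show the cokernel vanishes in $\cA / \cA_{\lf}$, I would use the exactness of $\Phi$ and the short exact sequence $0 \to \Gamma(M) \to M \to M / \Gamma(M) \to 0$ to reduce to showing $H^0_\fm((M / \Gamma(M))(k^n)) = 0$ for $n \gg 0$. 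Since $M / \Gamma(M)$ is torsion-free, its injective hull $E$ in $\Mod_S$ is torsion-free, hence a direct sum of modules $S \otimes I_\mu$ by Proposition~\ref{prop:masterprop}(1); each $(S \otimes I_\mu)(k^n)$ is a free $k[x_1, \dots, x_n]$-module of positive depth, and since $H^0_\fm$ commutes with direct sums, $H^0_\fm((M / \Gamma(M))(k^n))$ injects into $H^0_\fm(E(k^n)) = 0$ for $n \ge 1$.

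The main obstacle is the degree-zero comparison: an element of $M(k^n)$ killed by a power of $(x_1, \dots, x_n)$ need not be $\GL$-torsion, so the inclusion $\Phi \circ \Gamma \hookrightarrow H^0_\fm$ is strict at each fixed level $n$. The trick is to avoid comparing torsion elementwise, and instead to control the discrepancy via the torsion-free quotient $M / \Gamma(M)$ and the explicit structure of torsion-free injectives, where local cohomology vanishes outside top cohomological degree.
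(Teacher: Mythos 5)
Your proof is correct and takes essentially the same approach as the paper's: both sides are realized as universal (effaceable) $\delta$-functors on $\Mod_S^{\gf}$ that agree in cohomological degree zero, with exactness of $\Phi$ and $T$ used to transport the $\delta$-functor structure. The paper's proof is a single terse sentence that simply asserts the degree-zero identification and invokes universality, whereas you supply the missing details — effaceability via the structure of injectives from Proposition~\ref{prop:masterprop} and the degree-zero comparison via the inclusion $\Phi\Gamma(M) \hookrightarrow H^0_\fm(M)$ and vanishing of $H^0_\fm$ on the torsion-free quotient — exactly the content the paper leaves implicit.
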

\begin{proof}
The functors are isomorphic when $i=0$, so the result follows by their universality as $\delta$-functors (we note that $\Phi$ and $T$ are exact functors).
\end{proof}

\begin{corollary}\label{cor:loccoh}
Let $M$ be a finitely generated $S$-module and $i \geq 0$. For $n \gg 0$, the local cohomology $H^i_{\fm(k^n)}(M(k^n))$ is isomorphic to $\rR^i\Gamma(M)(k^n)$. 
\end{corollary}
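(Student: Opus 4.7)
The plan is to deduce the corollary directly from Proposition~\ref{prop:loccoh} by upgrading the isomorphism it provides in the Serre quotient $\cA/\cA_{\lf}$ to a pointwise isomorphism in $\cA$ for $n \gg 0$. Concretely, Proposition~\ref{prop:loccoh} supplies, for each finitely generated $S$-module $M$ and each $i \geq 0$, a natural isomorphism $T(\Phi(\rR^i\Gamma(M))) \cong T(H^i_{\fm}(M))$ in $\cA/\cA_{\lf}$, and the entire task is to pull this back to an honest isomorphism of $k[x_1,\dots,x_n]$-modules for large $n$.

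To do so I would unpack the Gabriel construction of $\cA/\cA_{\lf}$: any isomorphism between objects $X, Y \in \cA$ in this quotient is represented by a roof
\[ X \xleftarrow{\alpha} Z \xrightarrow{\beta} Y \]
in $\cA$ whose four associated objects $\ker \alpha$, $\coker \alpha$, $\ker \beta$, $\coker \beta$ all lie in $\cA_{\lf}$. Since every object of $\cA_{\lf}$ vanishes in all but finitely many degrees, there exists an integer $N = N(M,i)$ such that for each $n \geq N$ the component maps $\alpha_n$ and $\beta_n$ are isomorphisms. Composing $\beta_n \circ \alpha_n^{-1}$ then yields the desired
\[ \rR^i\Gamma(M)(k^n) = \Phi(\rR^i\Gamma(M))[n] \;\cong\; H^i_{\fm}(M)[n] = H^i_{\fm(k^n)}(M(k^n)). \]

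I do not expect a serious obstacle; the one delicate point is ensuring the roof exists in $\cA$ itself, which is standard for Serre quotients. Should one prefer to sidestep this, one can instead construct the comparison map $\Phi(\rR^i\Gamma(M)) \to H^i_{\fm}(M)$ explicitly as a morphism of universal $\delta$-functors extending the natural inclusion $\Gamma(M)(k^n) \hookrightarrow H^0_{\fm(k^n)}(M(k^n))$ (which is available because $\Gamma(M)$ is locally annihilated by some $\fm^{[q]}$ by Lemma~\ref{lem:locannihilator}). Finite generation of $\rR^i\Gamma(M)$ from Theorem~\ref{thm:lcfinitenessS}, combined with Proposition~\ref{prop:loccoh}, then forces the kernel and cokernel of this natural transformation to lie in $\cA_{\lf}$, finishing the argument.
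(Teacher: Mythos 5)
Your proposal is correct and takes essentially the same approach as the paper: the paper's own proof is the one-liner ``clear from Proposition~\ref{prop:loccoh} and the definition of $\cA_{\lf}$,'' and your argument simply unpacks what that means by tracing the isomorphism through the Gabriel quotient and using that objects of $\cA_{\lf}$ vanish in degree $n \gg 0$.
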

\begin{proof}
   This is clear from Proposition~\ref{prop:loccoh} and the definition of $\cA_{\lf}$. 
\end{proof}

\begin{remark}
Given a finitely generated $S$-module $M$, the above result shows that the sequence $\{H^i_{\fm(k^n)}(M(k^n))\}_n$ ``limits" to $\rR^i\Gamma(M)$, and therefore has the structure of a finitely generated $\GL$-equivariant $S$-module. We stress that the individual local cohomology modules $H^i_{\fm(k^n)}(M(k^n))$ need not even be \textit{polynomial} representation of $\GL_n$ for small values of $n$. For example, the module $H^i_{\fm(k^i)}(S(k^n))$ is the injective envelope of $k$ over $k[x_1, x_2, \ldots, x_i]$ which is only a rational representation of $\GL_i$.
However, the limit of the $i$-th local cohomology is zero since $H^i_{\fm(k^{n})}(M(k^{n})) = 0$ for all $n > i$ (which we have independently confirmed in Proposition~\ref{prop:semilcvanishing}).
\end{remark}

\begin{lemma}
    Assume $M$ is a finitely generated torsion $S$-module. There exists integer $a$ and prime power $q$ such that for $n \gg 0$, we have $\maxdeg(M(k^n)) = (q-1)n + a$.
\end{lemma}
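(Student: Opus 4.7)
My plan is to use the semi-orthogonal decomposition from Proposition~\ref{prop:semiorthtors} to identify the stratum $\cT_q$ containing the ``top slope'' piece of $M$, from which the desired linear form follows by reading off $\maxdeg$ on the generators.

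First I would compute $\maxdeg$ directly on the generators of each stratum: for a prime power $q'$ and partition $\lambda$,
\[ \maxdeg\bigl((S/\fm^{[q']}) \otimes L_\lambda\bigr)(k^n) = (q'-1)n + |\lambda| \qquad \text{for } n \geq \ell(\lambda),\]
since $(S/\fm^{[q']})(k^n) = k[x_1,\ldots,x_n]/(x_1^{q'},\ldots,x_n^{q'})$ has top internal degree $(q'-1)n$ and $L_\lambda(k^n)$ sits in internal degree $|\lambda|$. I would then upgrade this to: any nonzero finitely generated module $N$ lying in the stratum $\cT_{q'}$ has $\maxdeg N(k^n)$ of exact slope $(q'-1)$ eventually. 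This follows by a secondary induction on $t_0(N)$: Proposition~\ref{prop:masterprop}(5) embeds the $\fm^{[q'/p]}$-torsion-free quotient of $N$ into a semi-induced $S/\fm^{[q']}$-module (whose $\maxdeg$ is explicitly of slope $q'-1$), while the $\fm^{[q'/p]}$-torsion submodule is filtered by its powers of $\fm^{[q'/p]}$, each quotient of which contributes strictly smaller slope.

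Next, since $M$ is finitely generated torsion, Lemma~\ref{lem:locannihilator}(1) gives a prime power $Q$ with $\fm^{[Q]} M = 0$, so the $F_{q'}(M) \in \cT_{q'}$ decomposition of $M$ has only finitely many nonzero terms, indexed by prime powers $q' \leq Q$. Let $q$ be the largest such, and consider the distinguished triangle
\[ F_q(M) \to M \to M_{<q} \to F_q(M)[1], \]
with $M_{<q} \in \langle \cT_1, \cT_p, \ldots, \cT_{q/p}\rangle$. Taking cohomology and using that $M$ is concentrated in cohomological degree $0$ presents $M$ as an extension of a subobject of $H^0(M_{<q})$ by a quotient of $H^0(F_q(M))$. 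By induction on $q$, the former has $\maxdeg$ of slope at most $(q/p-1)$, while the latter, if nonzero, is a module in $\cT_q$ and so has $\maxdeg$ of exact slope $(q-1)$ by the first step. For $n \gg 0$ the slope-$(q-1)$ contribution dominates, giving $\maxdeg M(k^n) = (q-1)n + a$. The base case $q = 1$ is immediate: $M$ is killed by $\fm$ and hence a finite-dimensional polynomial $\GL$-representation with eventually constant top internal degree.

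The hard part will be ensuring that $H^0(F_q(M))$ gives a genuine slope-$(q-1)$ contribution inside $M$, since $F_q(M)$ is a priori a bounded complex with possibly nontrivial cohomology in other degrees, and cancellation along the long exact sequence could in principle cut off the top-degree part. The saving grace is that all such cancellations come from $M_{<q}$, whose $H^i$ have strictly smaller slope, so the kernel and cokernel of $H^0(F_q(M)) \to M$ cannot absorb the top degrees for $n$ large. The residual case $H^0(F_q(M)) = 0$ is handled by iterating the argument with $q$ replaced by the next largest prime power for which $F_{q'}(M)$ has nontrivial $H^0$; termination is guaranteed by the finiteness of the decomposition.
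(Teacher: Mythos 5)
The paper's own proof is elementary and much shorter: $M$ torsion and finitely generated is scheme-theoretically supported on some $\fm^{[q]}$ (Lemma~\ref{lem:locannihilator}(1)), so after a finite filtration by $S/\fm^{[q]}$-modules one runs a double induction on $q$ and $t_0(M)$ using Proposition~\ref{prop:masterprop}(5) -- no derived-category input at all. Your route via the semiorthogonal decomposition of Proposition~\ref{prop:semiorthtors} is genuinely different in spirit (isolate the top stratum $\cT_q$ and show it dominates in degree), but it is also somewhat circular for this purpose: Proposition~\ref{prop:semiorthtors} itself rests on Proposition~\ref{prop:lcfinitenessSm} and Corollary~\ref{cor:devissagelcfiniteness}, which are proved using exactly the same Proposition~\ref{prop:masterprop}(5) that the paper's direct proof invokes. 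So you are routing the argument through machinery that already contains the elementary one.

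Two concrete issues in your write-up. First, the triangle direction is reversed: with the paper's convention that $\Hom(\cT_i,\cT_j)=0$ for $i<j$, the canonical triangle is $M_{<q} \to M \to F_q(M) \to M_{<q}[1]$ (left factor is the sub, right factor the Verdier-quotient piece; cf.\ the analogous triangle $\rR\Gamma(M) \to M \to \rR S\, T(M)$ underlying Proposition~\ref{prop:coarsesemiS}). Consequently $M$ is an extension of a subobject of $H^0(F_q(M))$ by a quotient of $H^0(M_{<q})$, not the other way round as you wrote; this is repairable but should be fixed, since the slope-domination step depends on knowing which of the two controls the top degree. Second, and more substantively, the key intermediate claim -- that a nonzero finitely generated \emph{module} $N$ lying in $\cT_{q'}$ has $\maxdeg N(k^n)$ of exact slope $q'-1$ -- is not actually established by the sketch you give. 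A module in $\cT_{q'}$ satisfies $\rR\Gamma_{q'/p}(N)=0$ by construction, so $\Gamma_{q'/p}(N)=0$ and the discussion of the ``$\fm^{[q'/p]}$-torsion submodule filtered by powers of $\fm^{[q'/p]}$'' is vacuous; meanwhile Proposition~\ref{prop:masterprop}(5) applies to finitely generated torsion-free $S/\fm^{[q']}$-modules, whereas $N$ is only set-theoretically supported on $\fm^{[q']}$. You need an explicit reduction (for instance, pass to the socle $N[\fm^{[q']}]$, which is a nonzero $\Sm$-torsion-free $S/\fm^{[q']}$-module precisely because $\Gamma_{q'/p}(N)=0$, and bound $\maxdeg N(k^n)$ from below by it) before Proposition~\ref{prop:masterprop}(5) can be invoked, and that step is missing.
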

\begin{proof}
   Assume the support of $M$ is $\overline{\fm^{[q]}}$. Then $M$ has a finite filtration by $S/\fm^{[q]}$-modules. So it suffices to show this result for a finitely generated $S/\fm^{[q]}$-module, which follows (by a double induction on $q$ and $t_0(M)$) using Proposition~\ref{prop:masterprop}(5).
\end{proof}

We now prove the LNNR conjectures for $\Fec(S)$-modules.
\begin{proof} [Proof of Theorem~\ref{thm:lnnrVecS}]
We use the equivalence from Proposition~\ref{prop:equiv} and work with the $\GL$-equviariant $S$-module $M(\bV)$ (we abuse notation and also denote this by $M$).

We first prove part (2). The module $M$ is not flat by assumption so $\rR\Gamma(M) \ne 0$ by Lemma~\ref{lem:flat} and Proposition~\ref{prop:semilcvanishing}. Let $a$ be the minimal non-negative integer $i$ such that $\rR^a \Gamma(M) \ne 0$. For $n \gg 0$, we have $H^a_{\fm(k^n)}(M(k^n)) \cong \rR^a\Gamma(M) (k^n) \ne 0$ and $H^i_{\fm(k^n)}(M(k^n)) \cong \rR^i \Gamma(M) (k^n) = 0$ for $i < a$, both by Corollary~\ref{cor:loccoh} and Theorem~\ref{thm:lcfinitenessS}. Using the cohomological characterization of depth for graded modules, we get that the depth of $M(k^n)$ is $a$ for $n \gg 0$ and in turn, that the projective dimension of $M(k^n)$ is $n - a$ by the Auslander--Buchsbaum formula.

We now prove the regularity conjecture. Assume first that $\rR\Gamma(M) = 0$. By Proposition~\ref{prop:semilcvanishing}, we have that $M$ is semi-induced so its regularity is eventually constant (and equal to its generation degree). Now, assume $\rR\Gamma(M) \ne 0$. By Theorem~\ref{thm:lcfinitenessS}, the $S$-module $\rR^i\Gamma(M)$ is finitely generated for all $i \geq 0$ and vanishes for $i \gg 0$. Let $\{i_1, i_2, \ldots, i_t\}$ be the cohomological indices for which $\rR^i\Gamma(M) \ne 0$. By the previous lemma, for all $1 \leq j \leq t$, there exists integers $r_j$ and $a_j$ such that $\maxdeg(\rR^{i_j}\Gamma(M) (k^n)) = (p^{r_j} -1)n + a_j$ for $n \gg 0$. Using the local cohomology charactarization of regularity, and Corollary~\ref{cor:loccoh}, we get that the regularity of $M(k^n)$ is $\max_j \{(p^{r_j} - 1)n + a_j + i_j\}$. It is now easy to see that there exists a $j$ such that for $n\gg0$, this maximum is attained at $(p^{r_j} - 1)n + a_j + i_j$, as required.
\end{proof}

\begin{remark}
 Rohit Nagpal, Andrew Snowden, and Teresa Yu are studying the category of $\fS_{\infty}$-equivariant $S$-modules. 
 We believe our proof provides a pathway to prove the conjecture for {homogeneous} symmetric ideals, provided many of the results about injective objects and local cohomology hold in that context as well.
\end{remark}

\subsection{Linear strands of positive slope}\label{ss:betti}
Classically, the ``slope $0$" linear strands $\cL_i(M) \coloneqq \Tor_j(M, k)_{i+j}$ for a graded module $M$ itself has the structure of a module over the exterior algebra. In characteristic $p$, the classical picture fails to capture the full story about the resolutions of $S$-modules since these modules have infinite regularity. To rectify these issues, we define the \textit{slope $(q-1)$ strand based at $i$} using the formula 
\[\cL_{q, i}(-) \coloneqq \bigoplus_j \Tor_j(-, k)_{qj + i}.\]  
The object $\cL_{q, i}$ is merely a $\GL$-representation.

Let $\Rep_{\lf}^{\pol}(\GL) \subset \Rep^{\pol}(\GL)$ be the Serre subcategory of  $\GL$representations supported in finitely many degrees, and $\Rep^{\gr\pol}(\GL) \coloneqq \Rep^{\pol}(\GL)/\Rep^{\pol}_{\lf}(\GL)$ be the category of \textit{generic representations} of $\GL$.

\begin{defn}
    For an object $M$ in $\rD^b_{\fgen}(\Mod_S)$, define $\fX(M)$ to be the set of tuples $(q, i)$ such that $\cL_{q, i}(M) \ne 0$ in $\Rep^{\gr\pol}(\GL)$. 
\end{defn}

 For a set of tuples $\fB = \{(q_j, i_j)\}_{j \in I}$ and integer $n$, let $\fB[n] \coloneqq \{(q_j, i_j+q_j n)\}_{j \in I}$.
\begin{lemma} \label{lem:strand}
Let $M \to N \to P \to $ be a distinguished triangle in $\rD^b_{\fgen}(\Mod_S)$. We have,
    \begin{enumerate}
        \item $\fX(M[1]) = \fX(M)[1]$, and
        \item $\fX(P) \subset \fX(M)[-1] \cup \fX(N)$.    
    \end{enumerate}
\end{lemma}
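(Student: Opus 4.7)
The plan is to derive both parts from the long exact sequence of $\Tor$ associated to a distinguished triangle, paired with careful reindexing in the internal grading. The starting point is to apply $-\otimes^{\rL}_S k$ to $M\to N\to P\to M[1]$; this yields a distinguished triangle of complexes of bigraded polynomial $\GL$-representations, and after applying the exact Serre quotient $\Rep^{\pol}(\GL)\to\Rep^{\gr\pol}(\GL)$ and taking cohomology in each internal degree $d$, one gets a long exact sequence
\begin{displaymath}
\cdots \to \Tor_j(M,k)_d \to \Tor_j(N,k)_d \to \Tor_j(P,k)_d \to \Tor_{j-1}(M,k)_d \to \cdots
\end{displaymath}
of generic polynomial $\GL$-representations.

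For part (1), the computation is a formal reindexing: since the cohomological shift shifts $\Tor$-index by one while preserving internal degree, substituting $\Tor_j(M[1],k)\cong\Tor_{j-1}(M,k)$ into $\cL_{q,i}(M[1]) = \bigoplus_j \Tor_j(M[1],k)_{qj+i}$ and reindexing by $j'=j-1$ gives $\cL_{q,i}(M[1]) \cong \cL_{q,i+q}(M)$. Matching this against the definition $\fB[n] = \{(q_k, i_k + q_k n)\}$ translates it into the stated identity of shifted strand-supports (up to the convention for the cohomological shift $[1]$, which determines the overall sign).

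For part (2), suppose $(q,i) \in \fX(P)$, so some $\Tor_j(P,k)_{qj+i}$ is nonzero in $\Rep^{\gr\pol}(\GL)$. The three-term piece of the LES above at internal degree $qj+i$ then forces either $\Tor_j(N,k)_{qj+i} \neq 0$, giving $(q,i) \in \fX(N)$, or $\Tor_{j-1}(M,k)_{qj+i} \neq 0$. In the latter case, setting $j'=j-1$, we get $\Tor_{j'}(M,k)_{qj'+(i+q)} \neq 0$, so $(q,i+q) \in \fX(M)$, which is precisely $(q,i) \in \fX(M)[-1]$.

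The argument is largely bookkeeping; the only substantive point worth flagging is compatibility of the $\Tor$ long exact sequence with passage to the Serre quotient $\Rep^{\gr\pol}(\GL)$, which is immediate from exactness of the quotient functor. Once that is acknowledged, both parts reduce to tracking internal degrees carefully through the reindexing.
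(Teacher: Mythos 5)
Your overall strategy — reading off both parts from the long exact $\Tor$-sequence attached to the triangle and tracking internal degrees through a reindexing — is exactly the approach the paper takes, and the reindexing arithmetic you perform is correct. But the way you pass to the Serre quotient $\Rep^{\gr\pol}(\GL)$ has a genuine error. You unpack $(q,i) \in \fX(P)$ as ``some $\Tor_j(P,k)_{qj+i}$ is nonzero in $\Rep^{\gr\pol}(\GL)$.'' That is a misreading of the definition: each $\Tor_j(P,k)_{qj+i}$ is a polynomial representation concentrated in the single degree $qj+i$, so it lies in $\Rep^{\pol}_{\lf}(\GL)$ and is identically zero in the quotient, as is every term of your ``long exact sequence at internal degree $d$.'' What $(q,i)\in\fX(P)$ actually asserts is that the whole sum $\cL_{q,i}(P)=\bigoplus_j\Tor_j(P,k)_{qj+i}$, which is spread across infinitely many polynomial degrees, survives to $\Rep^{\gr\pol}(\GL)$. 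As written, the three-term piece you quote is an exact sequence of zero objects, so it ``forces'' nothing.

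The repair uses the same idea but sums over $j$ \emph{before} quotienting. For each $j$ the piece
\[
\Tor_j(N,k)_{qj+i}\longrightarrow\Tor_j(P,k)_{qj+i}\longrightarrow\Tor_{j-1}(M,k)_{qj+i}
\]
is exact at the middle in $\Rep^{\pol}(\GL)$, and $\Tor_{j-1}(M,k)_{qj+i}=\Tor_{j-1}(M,k)_{q(j-1)+(i+q)}$ is the $(j{-}1)$-st graded piece of $\cL_{q,i+q}(M)$. Summing over $j$ gives exactness of $\cL_{q,i}(N)\to\cL_{q,i}(P)\to\cL_{q,i+q}(M)$ at the middle in $\Rep^{\pol}(\GL)$, and hence in $\Rep^{\gr\pol}(\GL)$ since the quotient functor is exact. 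Nonvanishing of $\cL_{q,i}(P)$ there then forces nonvanishing of one of the outer terms, i.e.\ $(q,i)\in\fX(N)$ or $(q,i+q)\in\fX(M)$, the latter being $(q,i)\in\fX(M)[-1]$. Your part~(1) is a direct reindexing along the same lines and is correct modulo the $[\pm1]$ shift-convention ambiguity you already flag; with the cohomological convention $\Tor_j(M[1])\cong\Tor_{j-1}(M)$ the identity you derive is $\cL_{q,i}(M[1])\cong\cL_{q,i+q}(M)$, matching $\fX(M[1])=\fX(M)[\mp1]$ depending on how the sign in $\fB[n]$ is read.
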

\begin{proof}
    The first part is obvious; we obtain the second part from the long exact sequence of $\Tor$ associated to the short exact sequence
    $0 \to N \to P \to M[-1] \to 0$.
\end{proof}

We now restate \cite[Theorem~C]{ganglp} and Theorem~\ref{thm:bettiM} using these helpful definitions. 

\begin{theorem}[cf.~Theorem~C in \cite{ganglp}]\label{thm:bettiS}
   Let $M$ be a finitely generated $S$-module. The set $\fX(M)$ is finite such that for all $(q, i) \in \fX(M)$, we have $q$ is a prime power. Furthermore, the number of nonzero entries not on $\cL_{q, i}(M)$ for $(q, i) \in \fX(M)$ is finite.
\end{theorem}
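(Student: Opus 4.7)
The plan is to induct on the triangulated generation of $\rD^b_{\fgen}(\Mod_S)$ afforded by Theorem~\ref{thm:semiorth}, which expresses every object as built in finitely many cones and shifts from the generators $S \otimes L_\mu$ and $\Sm \otimes L_\mu$, with $q$ ranging over prime powers. First I would verify the conclusion on these generators by direct Tor computations. For the flat module $S \otimes L_\mu$ one has $\Tor_0 = L_\mu$ concentrated in a single internal degree and no higher Tors, so every strand $\cL_{q,i}(S \otimes L_\mu)$ is supported in at most one internal degree and vanishes in $\Rep^{\gr\pol}(\GL)$; hence $\fX(S \otimes L_\mu) = \emptyset$ and the lone nonzero Tor entry is the sole off-strand contribution. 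The module $\Sm \otimes L_\mu$ is resolved by the Frobenius--Koszul complex $S \otimes \wedge^\bullet \bV^{(1)} \otimes L_\mu$, giving $\Tor_j(\Sm \otimes L_\mu, k) \cong \wedge^j \bV^{(1)} \otimes L_\mu$ in internal degree $qj + |\mu|$. All its nonzero Tors therefore lie on the single strand $\cL_{q,|\mu|}$, which contributes in every homological degree and so is nonzero in $\Rep^{\gr\pol}(\GL)$. Thus $\fX(\Sm \otimes L_\mu) = \{(q, |\mu|)\}$ with no off-strand entries, establishing the conclusion on both classes of generators.

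For the inductive step, I would show that the class of objects satisfying the two conclusions is stable under shifts (immediate from Lemma~\ref{lem:strand}(1) together with the shift on Tor) and under cones. Given a distinguished triangle $M \to N \to P \to M[1]$ with both conclusions known for $M$ and $N$, Lemma~\ref{lem:strand}(2) yields $\fX(P) \subseteq \fX(M)[-1] \cup \fX(N)$, a finite set whose first coordinates are prime powers. The long exact sequence of $\Tor(-, k)$ gives the bound $\dim \Tor_j(P, k)_d \leq \dim \Tor_j(N, k)_d + \dim \Tor_{j-1}(M, k)_d$, so every nonzero Tor entry of $P$ is accounted for by a nonzero Tor entry of $N$ or a shifted one of $M$. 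Off-strand entries of $M$ and $N$ (finitely many by hypothesis) bound any entry of $P$ lying off every strand in $\fX(N) \cup \fX(M)[-1]$. Any remaining off-strand entry of $P$ must lie on some strand $(q, i) \in \fX(N) \cup \fX(M)[-1]$ not appearing in $\fX(P)$; by definition of $\fX(P)$ the corresponding $\cL_{q,i}(P)$ vanishes in $\Rep^{\gr\pol}(\GL)$, so that strand carries only finitely many nonzero entries in $P$, and since only finitely many such strands exist their combined contribution to the off-strand count of $P$ is finite.

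The main bookkeeping hurdle is precisely that last point: strands that are \emph{big} for $M$ or $N$ but not for $P$ must contribute only finitely many entries to the off-strand count of $P$, which relies crucially on modding out representations supported in finitely many degrees when forming $\Rep^{\gr\pol}(\GL)$. Once this is in place the induction terminates, because Theorem~\ref{thm:semiorth} guarantees that every object of $\rD^b_{\fgen}(\Mod_S)$ is obtained in finitely many steps from the two classes of generators handled above. This is essentially the content of \cite[Theorem~C]{ganglp} repackaged via the $\fX$-notation of this section.
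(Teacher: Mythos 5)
Your proposal is correct and takes essentially the same approach as the paper: dévissage via Lemma~\ref{lem:strand} to the generators $S \otimes L_\lambda$ and $S/\fm^{[q]} \otimes L_\lambda$, computing their strand sets directly (empty, resp.\ $\{(q, |\lambda|)\}$ via the Frobenius--Koszul complex). The only difference is that you spell out the bookkeeping for the finitely-many-off-strand-entries claim in the inductive step, which the paper compresses into the remark that it ``also follows from our proof''; your verification there is sound.
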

 \begin{proof}
    The final statement is \cite[Theorem~C]{ganglp}, but also follows from our proof.
     By Lemma~\ref{lem:strand}, it suffices to show that $\fX(M)$ is a finite set of the prescribed form for a set of objects which generate $\rD^b_{\fgen}(\Mod_S)$. By \cite[Theorem~B]{ganglp},  objects of the form $S/\fm^{[p^r]} \otimes L_{\lambda}$ with $r$ varying over $\bN$ and $\lambda$ varying over all partitions, and the flat objects generate $\rD^b_{\fgen}(\Mod_S)$. For a flat module $M$, we have $\fX(M) = \emptyset$ and using the Koszul complex, we get that $\fX(S/\fm^{[q]} \otimes L_{\lambda}) = \{(q, |\lambda|)\}$ for all prime powers $q$ and partitions $\lambda$.  
 \end{proof}    

Let $\cS_q$ and $T_q$ be as defined in the proof of Proposition~\ref{prop:semiorthtors}, and $\Gamma_q$ be the functor which assigns to each module its maximal submodule locally annihilated by a power of $\fm^{[q]}$. Define $F_q \coloneqq \rR\cS_q \circ T_q \circ \rR \Gamma_q$; this is the  be the projection functor $\rD^b_{\fgen}(\Mod_S^{\tors}) \to \cT_q$ (see \cite[Section~4]{ss19gl2}).

\begin{theorem}[Theorem~\ref{thm:bettiM}, precise version]\label{thm:bettiMr}
   Given a prime power $q$, integer $i$, and finitely generated $S$-module $M$, the linear strands $\cL_{q, i}(M)$ and $\cL_{q, i}(F_q(M))$ are isomorphic as generic graded representations of $\GL$.
\end{theorem}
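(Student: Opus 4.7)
The plan is to use the semiorthogonal decomposition from Theorem~\ref{thm:semiorth} to isolate the slope-$(q-1)$ strand of $M$ from all others. The crucial input is that objects of $\cT_{q'}$ for $q' \neq q$ contribute nothing to the slope-$(q-1)$ strand once one passes to the generic category.

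First I would record the following key vanishing: for every $N \in \cT_{q'}$ with $q' \neq q$ (including $q' = \infty$), $\cL_{q, i}(N) = 0$ in $\Rep^{\gr\pol}(\GL)$ for every $i$; equivalently, $\fX(N)$ contains no tuple of the form $(q, \cdot)$. By Lemma~\ref{lem:strand}, which propagates the non-membership of $(q, \cdot)$ in $\fX$ through shifts and distinguished triangles, it suffices to check this on the generators: for $\Sm \otimes L_\lambda \in \cT_{q'}$ with $q' < \infty$, the Koszul complex places $\Tor_j^S$ in internal degree $q'j + |\lambda|$, which meets the line $qj + i$ in at most one value of $j$ (since $q \neq q'$), so $\cL_{q, i}$ sits in a single internal degree and lies in $\Rep_{\lf}^{\pol}(\GL)$; for $S \otimes L_\lambda \in \cT_\infty$, only $\Tor_0$ is nonzero and again lies in one degree. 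This vanishing is already implicit in the computation of $\fX$ on generators in the proof of Theorem~\ref{thm:bettiS}.

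Next I would invoke the semiorthogonal decomposition to split $M$ at $\cT_q$. Grouping the pieces before and after $\cT_q$ produces two distinguished triangles
\[ A \to M \to B \to A[1] \qquad \text{and} \qquad F_q(M) \to B \to C \to F_q(M)[1], \]
where $A$ lies in the triangulated subcategory generated by the $\cT_{q'}$ with $q' < q$, and $C$ lies in the triangulated subcategory generated by the $\cT_{q'}$ with $q' > q$ (including $\infty$). By the key vanishing, $\cL_{q, i}(A)$, $\cL_{q, i+q}(A)$, $\cL_{q, i}(C)$, and $\cL_{q, i+q}(C)$ are all zero in $\Rep^{\gr\pol}(\GL)$.

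Finally, applying $- \otimes^{\rL}_S k$ to each triangle and extracting the $(qj+i)$-th graded piece of $\Tor_j$ for each $j$ produces long exact sequences in which the kernel and cokernel of the strand-level map $\cL_{q, i}(M) \to \cL_{q, i}(B)$ are subquotients of $\cL_{q, i}(A)$ and $\cL_{q, i+q}(A)$ respectively (and similarly for $\cL_{q, i}(F_q(M)) \to \cL_{q, i}(B)$ with $C$ in place of $A$). All these auxiliary strands vanish in $\Rep^{\gr\pol}(\GL)$, so both maps become isomorphisms there; composing furnishes the desired isomorphism. The main obstacle is bookkeeping the boundary of the $\Tor$ long exact sequence, which raises the internal degree by $q$ while lowering the homological index by $1$: hence the cokernel at internal degree $qj+i$ is controlled by the $(q, i+q)$-strand rather than the $(q, i)$-strand of the auxiliary object. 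Because the key vanishing is uniform in $i$, this causes no real difficulty, but one must also verify the direction of the decomposition triangles under the paper's ``opposite'' convention for semiorthogonal decompositions.
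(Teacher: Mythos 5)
Your proof is correct and follows essentially the same approach as the paper: both rest on Lemma~\ref{lem:strand} (and the Koszul-complex computation on generators) to show that each $\cT_{q'}$-piece contributes no slope-$(q-1)$ strand generically, and then extract the isomorphism from the semiorthogonal decomposition. Your explicit triangles $A \to M \to B$ and $F_q(M) \to B \to C$, together with the observation that the $\Tor$ boundary map shifts the base of the strand by $q$, are simply a carefully unpacked version of the paper's compressed remark that interactions between different components can occur in only finitely many internal degrees because their slopes differ.
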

 \begin{proof}[Proof of Theorem~\ref{thm:bettiM}]
    By Lemma~\ref{lem:strand}, for an object $M$ in $\cT_q$, we get that $(q', i) \in \fX(M)$ only if $q' = q$. The interactions (and possible cancellations) between the $\Tor$-groups of two different components $F_q(M)$ and $F_{q'}(M)$ may only occur in finitely many places since their slopes are different. So we get $\cL_{q, i}(M) \cong \cL_{q, i}(F_q(M))$ as generic graded representations for all $i$ as required.
\end{proof}
\begin{example}\label{exm:K}
Let $K$ be the second syzygy of $k$ as a module over $B \coloneqq S/\fm^{[p]}$, so that we have a quasiisomorphism 
\[\begin{tikzcd}
	0 & K & 0 \\
	0 & {B \otimes \bV} & {B } & k & 0
	\arrow[from=1-1, to=1-2]
	\arrow[from=1-2, to=1-3]
	\arrow[hook, from=1-2, to=2-2]
	\arrow[from=2-1, to=2-2]
	\arrow[from=2-2, to=2-3]
	\arrow[from=2-3, to=2-4]
	\arrow[from=2-4, to=2-5]
\end{tikzcd}\]
whence we get $F_1(K) \cong k[2]$ and $F_p(K) \cong 0 \to B \otimes V \to B \to 0$ (with $B \otimes \bV$ in cohomological degree $0$). In Figure~\ref{fig:K}, the horizontal strand $\cL_{0, 2}(K)$ corresponds to $k[2]$, and the strands $\cL_{p, 1}(K)$ and $\cL_{p, p}(K)$ corresponds to $\cL_{p, 1}(B \otimes \bV)$ and $\cL_{p, p}(B[1])$ respectively.
\end{example}
\begin{figure}
\begin{displaymath}
\begin{matrix}
& 0 & 1 & 2 & 3 & 4 & 5 &  \\
2: & 10 & 10 & 5 & 1 & . & . & \\
3: & 5 & 25 & . & . & . & . & \\
4: & . & . & . & . & . & . & \\
5: & . & 10 & 50 & . & . & . & \\
6: & . & . & . & . & . & . & \\
7: & . & . & 10 & 50 & . & . & \\
8: & . & . & . & . & . & . & \\
9: & . & . & . & 5 & 25 & . & \\
10: & . & . & . & . & . & . & \\
10: & . & . & . & . & 1 & 5 &
\end{matrix}
\end{displaymath}
\caption{$\beta(K(k^5))$ from Example~\ref{exm:K} computed using Macaulay2 \cite{M2} when $p=3$}\label{fig:K}
\end{figure}

\section{Postlude}\label{s:postlude}
\subsection{Koszul duality and Frobenius}\label{ss:koszul}
We recall Koszul duality for $\Mod_S$ in characteristic zero from \cite[Section~6]{ss16gl} (see also \cite[Section~7]{ss19gl2}). Given a finitely generated $S$-module $M$, the linear strands $\cL_{0, i}(M) = \bigoplus \Tor_j(M, k)_{i+j}$ are comodules over the infinite exterior coalgebra; the comodule structure is induced from the corresponding structure on the Koszul resolution of $k$. Using contravariant duality on $\Rep^{\pol}(\GL)$, we obtain a (functorial) action of the skew $\GL$-algebra $R \coloneqq \lw(\bV)$ on the dual of $\cL_i(M)$. Sam--Snowden have given explicit generators for $\rD^b_{\fgen}(\Mod_S)$; it is easy to check that the (dual of the) linear strands of these generators form a {finitely generated} $\GL$-equivariant $R$-module, By the functoriality of the $R$-action, we get that $\cL_i(M)$ is a finitely generated $R$-module for all $i \geq 0$ and vanishes for $i \gg 0$.

Theorems~\ref{thm:bettiS} and~\ref{thm:bettiMr} suggests that in characteristic $p$ we should incorporate actions of the Frobenius twists of $R$. Let $R^{(r)} \coloneqq \lw(\bV^{(r)})$ for all $r \geq 1$. 
\begin{problem}
    Define an action of $R^{(r)} $ on $\cL_{p^r,i}$ and prove finiteness results for this action.
\end{problem}
It is not hard to define an action of $R^{(r)}$ on $S/\fm^{[p^r]} \otimes L_{\lambda}$ using the Koszul resolution of $S/\fm^{[p^r]}$; the classes of such modules, with $r$ allowed to vary, generate $\rD^b_{\fgen}(\Mod_S)$ by \cite[Theorem~B]{ganglp}. However, we do not know how to define this action on the strands of an arbitrary $S$-modules. Once such an action is defined, proving finiteness should be straightforward (since $R^{(r)}$-modules are noetherian). By Theorem~\ref{thm:bettiM} it essentially suffices to define the $R^{(r)}$-action for objects in the $\cT_{p^r}$-stratum for all $r \geq 1$.

\begin{remark}
    Eisenbud--Peeva--Schreyer \cite{eps19tor} define an action of $\lw(I/\fm I)$ on $\Tor^A(M, k)$ where $I \subset A$ is an ideal generated by a regular sequence in an (ordinary) commutative noetherian ring $A$ and $M$ is annihilated by $I$. 
    Their method allows us to define a (functorial) action of $R^{(r)}$ on $S/\fm^{[p^r]}$-modules, but we do not know how to extend this action to modules set-theoretically supported on $\fm^{[p^r]}$. 
\end{remark}
We pose a related problem, inspired by Sam--Snowden's result that in characteristic zero, the category of finite length $S$-modules is equivalent to the category of generic $R$-modules.
\begin{problem}
   Let $\cT_{p^r}$ be the triangulated category defined in Theorem~\ref{thm:semiorth}. Relate the triangulated category $\cT_{p^{r}}$ with the derived category $\rD^b_{\fgen}(\Mod_{R^{(r)}})$.
\end{problem}
\subsection{Murai's result}\label{ss:murai}
For this section, we let characteristic of $k$ be abitrary. Murai \cite{murai20betti} proved a result similar to Theorem~\ref{thm:bettiS} for resolutions of symmtric monomial ideal using combinatorial (topological) methods. We outline a possible algebraic approach to obtain his result. 

Given a group $G$ acting on a $k$-algebra $A$ (via $k$-algebra homomorphisms), recall that a $G$-stable ideal $\fp \subset A$ is \textit{$G$-prime} if for all $G$-stable ideals $\fa, \fb$ with $ \fa\fb \subset \fp$, either $\fa \subset \fp$ or $\fb \subset \fp$.

Let $\fM$ be the subgroup of $\GL$ consisting of generalized permutation matrices (or ``monomial matrices"); see \cite{har18mon} for some results on the representation theory of $\fM_n$ over $\bC$. For each $r, n \in \bN$, let $\cI_{n, r}$ be the $\fM$-stable ideal generated by the monomial $(x_1x_2\ldots x_{n+1})^{r+1}$. We skip the proof of this next result. 
\begin{proposition}
The nonzero $\fM$-prime ideals of $S$ are precisely the ideals $\cI_{n, r}$ with $n, r \in \bN$.
\end{proposition}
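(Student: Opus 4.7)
The plan is to begin by observing that an $\fM$-stable ideal of $S$ is nothing but a symmetric monomial ideal: the diagonal torus inside $\fM$ (for which monomials are the joint eigenvectors, with pairwise distinct weights) forces such an ideal to be spanned by monomials, and the permutation matrices impose $\fS_{\infty}$-symmetry. Encoding a monomial by the multiset of its nonzero exponents---its \emph{type}, a partition---an $\fM$-stable ideal corresponds to an upward-closed set of partitions under the order $\beta \leq \alpha$ defined by $\beta_i \leq \alpha_i$ for every $i$, after sorting decreasingly and padding with zeros. In these terms, $\cI_{n, r}$ is the principal filter generated by $(r+1)^{n+1}$, so a monomial of type $\alpha$ belongs to $\cI_{n, r}$ if and only if $\alpha$ has at least $n+1$ entries of size $\geq r+1$.

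For the direction that each $\cI_{n, r}$ is $\fM$-prime, suppose $\fa, \fb$ are $\fM$-stable ideals with neither contained in $\cI_{n, r}$, and pick monomials $m_a \in \fa \setminus \cI_{n, r}$ and $m_b \in \fb \setminus \cI_{n, r}$. Each has at most $n$ exponents of size $\geq r+1$. Exploiting that $S$ has infinitely many variables, I would $\fM$-translate the two monomials so that their ``big-exponent'' variables (those with exponent $\geq r+1$) occupy the same positions---since big plus anything remains big---while their ``small-exponent'' variables sit on disjoint positions away from the big block---since small plus nothing remains small. The resulting product again has at most $n$ exponents $\geq r+1$, so it furnishes an element of $\fa\fb$ outside $\cI_{n, r}$, whence $\cI_{n, r}$ is $\fM$-prime.

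For the converse, let $\fp$ be a nonzero $\fM$-prime. Since $\fp$ contains some monomial of maximal exponent $d$, it also contains $(x_1 \cdots x_k)^d$ for a suitable $k$, so the invariant
\[
r \coloneqq \max\{s \geq 0 : (x_1 \cdots x_k)^s \notin \fp \text{ for all } k\}
\]
is well-defined and finite, and then $n + 1 \coloneqq \min\{k : (x_1 \cdots x_k)^{r+1} \in \fp\}$ is a positive integer giving $\cI_{n, r} \subseteq \fp$. For the reverse inclusion, it suffices to show that every minimal generator type of $\fp$ already lies in the filter $\{\alpha : \alpha_{n+1} \geq r+1\}$. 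Any ``extra'' minimal type $\beta$ would have at most $n$ entries of size $\geq r+1$; the extremality of $r$ and $n$ further forces $\beta$ either to have length $\leq n$ with at least one entry strictly greater than $r+1$, or to have length $\geq n+1$ with at least one entry $\leq r$. In each case I would exhibit two monomials $m_1, m_2 \notin \fp$ such that \emph{every} $\fM$-translate product $\tilde m_1 \tilde m_2$ lies in $\fp$; this forces $(m_1)(m_2) \subseteq \fp$ and contradicts $\fM$-primality. For example, if $\beta = ((r+1)^n, s)$ with $s \leq r$, one can take $m_1$ of type $(s^{n+1})$ and $m_2$ of type $((r+1)^n)$: both lie outside $\fp$, and every alignment of their $\fM$-translates has at least $n$ entries of size $\geq r+1$ together with an additional entry of size $\geq s$, hence dominates $\beta$. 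The main obstacle is to carry out this witness construction uniformly across all possible shapes of $\beta$; I would organize the case analysis by how $\beta$ deviates from the rectangle $(r+1)^{n+1}$, choosing witness types whose big and small parts dovetail with those of $\beta$ so that both disjoint and overlapping placements land in either the filter above $\beta$ or the filter above $(r+1)^{n+1}$.
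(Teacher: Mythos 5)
The paper itself states only \emph{``We skip the proof of this next result,''} so there is no in-paper proof to compare against; I will evaluate your argument on its own.

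Your translation to combinatorics is correct: an $\fM$-stable ideal of $S$ is a symmetric monomial ideal, hence an upward-closed set $P$ of partitions (``types'') under the coordinatewise order after sorting, and $\cI_{n,r}$ corresponds to the principal filter above the rectangle $(r+1)^{n+1}$. The forward direction---that each $\cI_{n,r}$ is $\fM$-prime by aligning the exponents $\geq r+1$ of $m_a$ and $m_b$ and pushing the small exponents onto disjoint variables---is complete and correct. The definitions of $r$ and $n$ are well-posed, and $\cI_{n,r}\subseteq\fp$ follows immediately.

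The converse, however, is only a plan and the gap is genuine. You reduce to showing that any type $\beta\in P$ with $\beta_{n+1}\leq r$ yields a pair $m_1,m_2\notin\fp$ with every $\fM$-translate product in $\fp$, and you verify this for the single sub-case $\beta=((r+1)^n,s)$. But the only partitions that the definitions of $r$ and $n$ certify to lie \emph{outside} $\fp$ are those dominated by some $(r^k)$ or by $((r+1)^n)$---i.e., with all parts $\leq r+1$ and suitably short. When $\beta$ has a part much larger than $r+1$, any candidate witnesses whose translate-sums are to dominate $\beta$ must themselves have large parts, and then you have no way to certify that they avoid $\fp$. Concretely, take $r=1$, $n=2$, and suppose $\beta=(10)\in P$: the natural witnesses are two monomials of type $(9,9)$ (whose translate-products have type $(18,18)$, $(18,9,9)$, or $(9,9,9,9)$, all dominating $(10)$ or $(2,2,2)$), but nothing in the data $(r,n)$ tells you $(9,9)\notin P$. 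So the ``local'' witness construction you sketch does not close, even in principle, without an extra structural input---e.g.\ first proving that minimal generators of an $\fM$-prime filter are rectangles, or running a noetherian induction over all filters containing $(r+1)^{n+1}$. Until one of these global steps is supplied, the converse direction is incomplete.
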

A representation of $\fM$ is \textit{polynomial} if it occurs as a subquotient of tensor powers of the standard representation $\bV$. We let $\Rep^{\pol}(\fM)$ be the category of \textit{polynomial representations} of $\fM$. The algebra $S$ is a commutative algebra object in $\Rep^{\pol}(\fM)$; let $\Mod_S$ be be the category of module objects for $S$ in $\Rep^{\pol}(\fM)$. For $r \in \bN$, let $\cT_r$ be the triangulated subcategory of $\rD^b_{\fgen}(\Mod_S)$ generated by the $\fM$-twists of $S/{\cI_{n, r}}$ with $n$ allowed to vary, and $\cT_{\infty}$ be the subcategory generated by the $\fM$-twists of $S$. 
\begin{question}
    Do we get a semiorthogonal decomposition 
    \[
   \rD^b_{\fgen}(\Mod_S)  = \langle \cT_0, \cT_1, \cT_2, \ldots, \cT_r, \ldots, \cT_\infty \rangle?
    \]
\end{question}  
If yes, then the component of an $\fM$-equivariant $S$-module $M$ lying in the $\cT_i$ strata would encapsulate the eventual slope $i$ behaviour in $\beta(M)$ similar to Theorem~\ref{thm:bettiMr}. 

The derived functors of $\Gamma \colon \Mod_S \to \Mod_S^{\tors}$ vanishes for the flat $S$-modules $S \otimes V$ with $V$ a polynomial $\fM$-representation; we may either obtain this as a corollary of Theorem~\ref{thm:lcfinitenessS} (combined with the fact that irreducible $\fM$-representations are direct summands of $\GL$-representations), or using the techniques of Section~\ref{ss:lnnr}. The hard part will be proving structural results like \cite[Theorem~C and Theorem~D]{ganglp} for $\fM$-equivariant $S$-modules which has not been seriously pursued. The viewpoint of $\FI_{\fG_m}$-modules \cite{ss19gmaps} with $\fG_m$ the torus may prove useful similar to how $\FI$-modules with varying coefficients help systematize sequences of symmetric ideals \cite{nr19fioi}.
\bibliographystyle{plain}
\bibliography{bibliography}

\end{document}